\newcommand{\ri}{\mathfrak{o}}
\newcommand{\mi}{\mathfrak{p}}
\newcommand{\C}{\mathbf{C}}
\newcommand{\e}{\sqrt{\epsilon}}
\newcommand{\p}{\varpi}
\newcommand{\supp}{\operatorname{supp}}
\newcommand{\U}{\mathrm{U}}
\theoremstyle{plain}
\newtheorem{thm}[equation]{Theorem}
\newtheorem{lem}[equation]{Lemma}
\newtheorem{prop}[equation]{Proposition}
\newtheorem{cor}[equation]{Corollary}
\theoremstyle{definition}
\newtheorem{defn}[equation]{Definition}
\newtheorem{rem}[equation]{Remark}
\def\Section#1{\section{#1}\setcounter{equation}{0}}
\title{Conductors and newforms for non-supercuspidal representations of unramified $\U(2,1)$}
\author{Michitaka Miyauchi}
\date{}
\keywords{$p$-adic group, local newform, conductor}
\subjclass[2010]{Primary 22E50, 22E35}
\address{
Department of Mathematics, Faculty of Science\\
Kyoto University\\
Oiwake Kita-Shirakawa Sakyo Kyoto 606-8502 JAPAN
}
\email{miyauchi@math.kyoto-u.ac.jp}
\begin{document}
%%%%%%%%%%%%%%%%%%%%%%%%%%%%%%%%%%

\begin{abstract} 
Let $G$ be the unramified unitary group in three variables
defined over a $p$-adic field with $p \neq 2$.
The conductors and newforms for representations of $G$
are defined by using
a certain family of open compact subgroups of $G$.
In this paper,
we determine the conductors of non-supercuspidal 
representations of $G$, and 
give explicit newforms in induced representations
for non-supercuspidal generic representations.
\end{abstract}

\maketitle
\pagestyle{myheadings}
\markboth{}{}

\section*{Introduction}
Local newforms play an important role
in the theory of automorphic representation.
We recall 
a result of Casselman \cite{Casselman}
for $\mathrm{GL}(2)$.
Let $F$ be a non-archimedean local field of characteristic zero
and $\ri_F$ its ring of integers 
with maximal ideal $\mi_F$.
For each non-negative integer $n$,
we define an open compact subgroup 
$\Gamma_0(\mi_F^n)$ of $\mathrm{GL}_2(F)$ by
\[
\Gamma_0(\mi_F^n) 
= 
\left(
\begin{array}{cc}
\ri_F & \ri_F \\
\mi_F^n & 1+\mi_F^n
\end{array}
\right)^\times.
\]
Let $(\pi, V)$ be an irreducible admissible
representation of $\mathrm{GL}_2(F)$.
We set
$V(n) = \{v \in V\, |\, \pi(k)v = v,\ k \in \Gamma_0(\mi_F^n)\}$.
Then the following holds:
%%%%%%%%%%%%%%%%%%%%%
\begin{thm}[\cite{Casselman}]\label{thm:gl2}
Suppose that an irreducible admissible representation
$(\pi, V)$ of $\mathrm{GL}_2(F)$
is generic.

(1) 
There exists a non-negative integer $n$ such that
$V(n)$ is not zero.

(2) Put $c(\pi) = \min \{n\geq 0\, |\, V(n) \neq \{0\}\}$.
Then the space $V(c(\pi))$ is one-dimensional.

(3)
For any integer $n \geq c(\pi)$,
\[
\dim V(n) = n-c(\pi)+1.
\]

(4)
The $\varepsilon$-factor of $\pi$ is a constant multiple
of $q_F^{-c(\pi)s}$
under suitable normalization,
where $q_F$ is the cardinality of the residue field of $F$.
\end{thm}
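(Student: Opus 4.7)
Since $\pi$ is generic, fix a Whittaker model $\mathcal{W}(\pi,\psi)$ with respect to a nontrivial additive character $\psi$ of $F$ and work with it throughout. The natural tool is the Kirillov model: restrict each $W \in \mathcal{W}(\pi,\psi)$ to the diagonal subgroup $A = \{\mathrm{diag}(a,1) : a \in F^\times\}$ and regard it as a function $\xi_W$ on $F^\times$. The plan is to translate $\Gamma_0(\mi_F^n)$-invariance into explicit conditions on the $\xi_W$, and then read off all four assertions from the resulting combinatorics.

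The group $\Gamma_0(\mi_F^n)$ is generated by (a) the diagonal torus in $\mathrm{GL}_2(\ri_F)$, (b) the upper unipotent with entries in $\ri_F$, and (c) the lower unipotent with entries in $\mi_F^n$. The transformation law of the Whittaker model handles (b) automatically, while (a) forces $\xi_W$ to be constant on each coset $\p^j \ri_F^\times$. The decisive condition is invariance under
\[
w_n = \begin{pmatrix} 1 & 0 \\ \p^n & 1 \end{pmatrix};
\]
applying the Iwasawa decomposition to $w_n \cdot \mathrm{diag}(a,1)$ and matching the two Whittaker values produces a linear relation among the scalars $\xi_W(\p^j)$.

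For (1), note that smoothness of a vector already gives invariance under (a) and (b) for some $n$, after which (c) is automatic once $n$ exceeds the lower bound of the support of $\xi_W$. For (2) and (3), the relation coming from $w_n$ is a three-term recursion on the values $\xi_W(\p^j)$, whose coefficients are determined by the at-most-two-dimensional Jacquet module of $\pi$. Solving this recursion level by level shows that the space of admissible $\xi_W$ at level $n$ has dimension $n - c(\pi) + 1$ and has a unique (up to scalar) minimal-support element occurring at $n = c(\pi)$; this yields both the one-dimensionality at the minimal level and the dimension formula. I expect this recursion, together with careful bookkeeping of the Jacquet module constants, to form the technical heart of the argument.

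The hardest step is (4). Here I would evaluate the local Jacquet--Langlands zeta integral $Z(s,W_0)$ against a newform $W_0$ spanning $V(c(\pi))$, identify it with $L(s,\pi)$ up to a unit, and apply the local functional equation. The image of $W_0$ under the long Weyl element can be expressed in the Kirillov model via the recursion established for parts (2) and (3); matching the two sides of the functional equation then produces exactly the factor $q_F^{-c(\pi)s}$ in the epsilon factor. The principal obstacle is controlling how the long Weyl element acts on the newform in the Kirillov model, and the analysis carried out for (2)--(3) is precisely what makes this control possible.
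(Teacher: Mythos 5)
This theorem is stated in the paper only as background, attributed to Casselman's 1973 paper; the paper itself supplies no proof, so there is no internal argument to compare against and your sketch must be judged as a reconstruction of Casselman's original proof.

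Your overall strategy — work in the Whittaker/Kirillov model, translate $\Gamma_0(\mi_F^n)$-invariance into conditions on the function $\xi_W$ on $F^\times$, and obtain (4) from the local functional equation applied to the newform — is the right one and matches the spirit of Casselman's argument. However, your justification of (1) has a genuine gap. You write that ``smoothness of a vector already gives invariance under (a) and (b) for some $n$,'' but the two subgroups (a) $\{\mathrm{diag}(a,d): a \in \ri_F^\times,\ d \in 1+\mi_F^n\}$ and (b) the full upper unipotent over $\ri_F$ do \emph{not} shrink as $n \to \infty$; only the lower unipotent part of $\Gamma_0(\mi_F^n)$ does. Smoothness guarantees invariance under some small open compact subgroup, not under all of $\ri_F^\times$ or $U(\ri_F)$, so the inclusion $\Gamma_0(\mi_F^n) \supset \text{(a)} \cup \text{(b)}$ actually works against you here. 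The standard fix is to use the Kirillov model positively: for a generic irreducible representation the Kirillov space contains $C_c^\infty(F^\times)$, and one exhibits an explicitly invariant vector (for instance the characteristic function of $\ri_F^\times$, which is invariant under (a) and (b) once $\psi$ is normalized to have conductor $\ri_F$, and then under (c) once $n$ is large enough). For (2)--(3), the recursion you derive from $w_n$-invariance is a legitimate route and close to later treatments (e.g.\ Roberts--Schmidt for $\mathrm{GSp}(4)$), though Casselman's own argument is more structural, bounding $\dim V(n+1) - \dim V(n)$ via the Jacquet module and closing the gap via the functional equation; without filling in the Jacquet-module bookkeeping you promise, (2)--(3) remain an outline. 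Your plan for (4) is correct in outline and is essentially how Casselman proceeds.
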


We call the integer $c(\pi)$ {\it the conductor of} $\pi$
and elements in $V(c(\pi))$ {\it newforms for} $\pi$.
In the proof of Theorem~\ref{thm:gl2},
Casselman showed implicitly
that newforms are test vectors for the 
appropriate Whittaker functional.
This property is important 
in the theory of zeta integrals.
After Casselman \cite{Casselman},
similar results are obtained by Jacquet,
Piatetski-Shapiro and Shalika \cite{JPSS} 
and Reeder \cite{Reeder} for $\mathrm{GL}_n(F)$
and by Roberts and Schmidt \cite{RS}
for $\mathrm{PGSp}(4)$.
For unitary groups,
there is a result by Lansky and Raguram \cite{LR}.
They computed the dimensions of the spaces of 
vectors fixed by certain open compact subgroups of 
unramified $\mathrm{U}(1,1)$.
However no comparison between conductors and exponents of
$\varepsilon$-factors has been studied for this group.

The author \cite{M} introduced the notion of newforms
for the unramified unitary group in three variables
defined over $F$.
We assume that $F$ is in addition of odd residual characteristic.
Let $E$ be the unramified quadratic  extension over $F$.
The group  unramified $\mathrm{U}(2,1)$
is realized as
$G = 
\{ g \in \mathrm{GL}_3(E)\ |\ 
{}^t\overline{g}Jg= J \}$,
where ${}^-$ is the non-trivial element in $\mathrm{Gal}(E/F)$
and
$J$ is $
\left(
\begin{array}{ccc}
0 & 0 &1\\
0 & 1 & 0\\
1 & 0 & 0
\end{array}
\right)$.
Newforms for $G$
is defined by a family of open compact subgroups
$\{K_n\}_{n \geq 0}$,
where
\begin{eqnarray*}
K_n
=
\left(
\begin{array}{ccc}
\ri_E & \ri_E &\mi^{-n}_E\\
\mi^n_E & 1+\mi^n_E & \ri_E\\
\mi^n_E & \mi^n_E & \ri_E
\end{array}
\right)
\cap G.
\end{eqnarray*}
For a smooth representation $(\pi, V)$ of $G$,
we denote by
$V(n)$ the space of $K_n$-fixed vectors in $V$.
We say that a smooth representation $(\pi, V)$
admits a newform
if $V(n)$ is not zero for some $n \geq 0$.
In \cite{M},
the author showed that
every irreducible generic representation of $G$
admits a newform.
The integer $N_\pi = \min \{n \geq 0\, |\, V(n) \neq \{0\}\}$
is called the conductor of $\pi$.
We call $V(N_\pi)$ the space of newforms for $\pi$
and $V(n)$ that of oldforms when $n > N_\pi$.
The main theorem of \cite{M}
is the multiplicity one property of newforms,
that is, 
for each irreducible admissible representation $\pi$ of $G$
admitting a newform,
the space of newforms for $\pi$ is one-dimensional.

One of aims of this paper 
is to show that
newforms for $G$ 
are  test vectors for the appropriate Whittaker functional.
This property is important for the application
to the theory of zeta integral.
In \cite{M2},
the author applied newform theory  for $G$
to Rankin-Selberg type zeta integrals of Gelbart,
Piatetski-Shapiro \cite{GPS} and Baruch \cite{Baruch},
and proved that 
zeta integrals of newforms for generic supercuspidal representations
agree with their $L$-factors.
This property means that 
zeta integrals of newforms do not  vanish.

To show that
newforms are test vectors for the Whittaker functional,
we determine the newforms for each non-supercuspidal
generic representation explicitly.
This property was already proved in \cite{M}
for a certain class of representations of $G$, which contains the generic supercuspidal
representations.
Therefore  we need to consider only non-supercuspidal
representations.
Similar results are obtained
by Lansky and Raguram for
unramified $\mathrm{U}(1,1)$ 
and for $\mathrm{SL}(2)$(\cite{LR} and \cite{LRS}),
and by Roberts and Schmidt for $\mathrm{PGSp}(4)$ (\cite{RS}).

Firstly,
we determine conductors and oldforms for
the parabolically induced representations from the 
Borel subgroup $B$ of $G$.
A parabolically induced representation $(\pi, V)$
of $G$ is induced from a quasi-character $\mu_1$
of $E^\times$
and a character $\mu_2$ of the norm-one subgroup $E^1$ of 
$E^\times$.
By the theory of Bernstein and Zelevinsky,
to obtain a basis for $V(n)$,
we only need to determine the elements
$g$ in $B\backslash G/K_n$
such that $\mu_1 \otimes \mu_2$ is trivial on $B\cap g K_n g^{-1}$.
It will turn out that the conductor of $\mathrm{Ind}_B^G \mu_1 \otimes \mu_2$
equals to $2c(\mu_1) + c(\mu_2)$,
where $c(\mu_i)$ is the conductor of $\mu_i$ (see Theorem~\ref{thm:cond}).

Secondly,
we determine the conductors of
irreducible subquotients
of reducible parabolically induced representations
according to the classification by Keys \cite{Keys}.
Here 
the level raising operator $\theta'$
on the space of newforms
plays an important role.
If we find a vector which does not vanish under $\theta'$,
then this vector should belong to 
the generic constituent of the parabolically induced representation.
The method in this part is rather technical.

Thirdly,
we determine newforms for generic non-supercuspidal
representations explicitly.
Every irreducible 
non-supercuspidal representation $\pi$
of $G$ can be embedded into
$\mathrm{Ind}_B^G \mu$,
for some quasi-character $\mu$ of 
the diagonal torus $T$.
We realize the newform for $\pi$ 
as a function in $\mathrm{Ind}_B^G \mu$ explicitly.
This is easy if $\pi$ is not the Steinberg representation.
The space of newform for the Steinberg representation
is characterized as the kernel of the level lowering operator
on oldforms for $\mathrm{Ind}_B^G \mu$.

Finally,
we prove that newforms 
are test vectors for the appropriate Whittaker functional.
Here the level raising operator $\theta'$
on the space of newforms
plays an important role again.
Using the explicit newforms,
we show that the injectivity of $\theta'$.
Then this property follows automatically (see Lemma~\ref{lem:theta}).
As a corollary,
we obtain the following dimension formula of oldforms.
Similar to the case of
$\mathrm{GL}(n)$
and $\mathrm{GSp}(4)$,
the growth of dimensions of oldforms for generic representations
$\pi$
of $G$ is independent of the choice of $\pi$.
%%%%%%%%%%%%%%%%%%%%%%%%%%%%%%
\begin{thm}
Let $(\pi, V)$ be an irreducible generic representation of $G$.
For $n \geq N_\pi$, we have
\[
\dim V(n) = \left\lfloor \frac{n-N_\pi}{2}\right\rfloor +1.
\]
\end{thm}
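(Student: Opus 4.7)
The plan is to establish the formula as the confluence of two bounds: an upper bound from the principal series analysis already employed in the conductor computation, and a lower bound from iterating the level-raising operator $\theta'$ whose injectivity was established in Lemma~\ref{lem:theta}.

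For the upper bound I would first compute $\dim \rho^{K_n}$ directly for a principal series $\rho = \mathrm{Ind}_B^G(\mu_1 \otimes \mu_2)$. By the Bernstein--Zelevinsky description recalled in the introduction, $\rho^{K_n}$ is parametrized by the double cosets in $B\backslash G/K_n$ on which $\mu_1 \otimes \mu_2$ is trivial on the stabilizer $B\cap gK_ng^{-1}$. The enumeration underlying the proof of Theorem~\ref{thm:cond} (which yielded $N_\rho = 2c(\mu_1)+c(\mu_2)$), extended to arbitrary $n\geq N_\rho$, should produce exactly $\lfloor(n-N_\rho)/2\rfloor+1$ such cosets. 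An irreducible generic non-supercuspidal $\pi$ is realized as a subquotient of such a $\rho$, and the explicit realization of its newforms inside $\rho$ obtained in the preceding section pins down $V(n)$ as a subspace of controlled dimension. For generic supercuspidal $\pi$, the analogous upper bound is available from \cite{M}.

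For the lower bound, starting from a nonzero newform $v_0 \in V(N_\pi)$, injectivity of $\theta':V(n)\to V(n+2)$ forces each iterate $(\theta')^i v_0 \in V(N_\pi+2i)$ to be nonzero. Using the test-vector property of newforms together with the explicit formulas for $\theta'$ in the Whittaker model, the iterates can be assembled into $\lfloor(n-N_\pi)/2\rfloor+1$ linearly independent vectors in $V(n)$. Matching the two bounds yields equality. The main obstacle is this lower bound: the iterates $(\theta')^i v_0$ live a priori in different spaces $V(N_\pi+2i)$, and since there is no general inclusion $V(m)\subseteq V(m')$, one must use the explicit Whittaker-model description both to transport these vectors into a common $V(n)$ and to verify their linear independence there. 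Once this is carried out, the upper and lower bounds coincide and the floor formula follows.
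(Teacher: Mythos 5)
Your proposal treats the ``test-vector property of newforms'' as a given input, but establishing that property is precisely the content the paper has to supply. The paper's route is: Proposition~\ref{prop:oldform} (quoted from \cite{M}) already says that if $W_v(1)\neq 0$ for every nonzero $v\in V(N_\pi)$, then the set $\{\theta'^i\eta^j v \mid i+2j+N_\pi = n\}$ is a basis for $V(n)$, giving the floor formula. So the whole theorem reduces to Proposition~\ref{prop:test}, the test-vector property for arbitrary irreducible generic $\pi$, which is proved by a case analysis: invoking \cite{M} Theorem 4.12 when $N_\pi\geq 2$ and $N_\pi>n_\pi$ (this covers supercuspidals and the ramified-$\mu_1$ inductions), invoking Casselman--Shalika when $N_\pi=0$, and otherwise using the explicit newforms from Section~4 (Corollary~\ref{cor:1}) together with the $\theta'$-injectivity criterion of Lemma~\ref{lem:theta}. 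Your proposal never addresses how to get the test-vector property, which is where the substance lies.

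There are also two concrete errors in your lower-bound sketch. First, $\theta'$ raises level by $1$, not $2$: it maps $V(n)\to V(n+1)$, while it is $\eta$ that maps $V(n)\to V(n+2)$. Iterating $\theta'$ alone lands you in $V(N_\pi+i)$, producing only one vector per level, not the $\lfloor(n-N_\pi)/2\rfloor+1$ you need; the paper's basis genuinely needs both operators. Second, Lemma~\ref{lem:theta} does not \emph{establish} injectivity of $\theta'$; it \emph{assumes} it as a hypothesis and derives genericity and the test-vector property as conclusions. The injectivity itself has to be checked separately (Lemma~\ref{lem:theta_inj} for full induced representations, and via the explicit realization of newforms inside the induced representation for proper subquotients). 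Finally, your upper-bound sketch ignores the conductor shift between $\pi$ and the ambient principal series $\rho$: for the Steinberg representation $N_\rho=0$ but $N_\pi=2$, so $\dim\rho^{K_n}=\lfloor n/2\rfloor+1$ is strictly larger than the target, and one has to subtract the contribution of the other constituent via the Bernstein--Zelevinsky exact sequence, which is exactly what Section~3 of the paper does case by case.
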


We hope that 
results in this paper
are useful for
the theory of zeta integral,
especially
Rankin-Selberg type one of Gelbart and 
Piatetski-Shapiro.

\medskip
\noindent
{\bf Acknowledgements} \
The author would like to thank 
Takuya Yamauchi 
for useful discussions,
and Yoshi-hiro Ishikawa for comments
on a previous version of this paper.

%%%%%%%%%%%%
\Section{Preliminaries}
In subsection~\ref{subsec:notation},
we fix notation for the unramified unitary group in three variables,
which is used in this paper.
In subsection~\ref{subsec:newform},
we recall from \cite{M} the definition and some properties of
newforms for unramified $\U(2,1)$.
As in \cite{M},
the level raising operator $\theta'$ on the space of newforms
plays an important role.
We will prepare Lemma~\ref{lem:theta}, which 
is a main tool
in our investigation.
%%%%%%%%%%
\subsection{Notation}\label{subsec:notation}
Let $F$ be a non-archimedean local field of characteristic zero,
$\ri_F$ its ring of integers,
$\mi_F = \p_F \ri_F$ the maximal ideal in $\ri_F$ and
$k_F = \ri_F/\mi_F$ the residue field of $F$.
We write $q = q_F$ for the cardinality of $k_F$.
Let $|\cdot|_F$ be the absolute value of $F$
normalized so that $|\p_F|_F = q_F^{-1}$.
We use the analogous notation 
for any non-archimedean local field.
Throughout this paper,
we  assume that
$F$ is of odd
residual characteristic.

Let $E = F[\e]$ be the quadratic unramified   extension over $F$,
where  $\epsilon$ is a non-square element in $\ri_F^\times$.
Then $\p_F$ is a uniformizer of $E$
and we abbreviate $\p = \p_F$.
We realize the $F$-points of the unramified unitary group
in three variables defined over $F$ as
$G = 
\{ g \in \mathrm{GL}_3(E)\ |\ 
{}^t\overline{g} Jg = J \}$,
where 
${}^-$ is the non-trivial element in $\mathrm{Gal}(E/F)$
and
\begin{eqnarray*}
J = 
\left(
\begin{array}{ccc}
0 & 0 &1\\
0 & 1 & 0\\
1 & 0 & 0
\end{array}
\right).
\end{eqnarray*}

Let $B$ be the Borel subgroup of $G$ consisting of 
the upper triangular elements in $G$,
$T$ the Levi subgroup of $B$
and $U$ the unipotent radical of $B$.
We write $\hat{U}$ for the opposite of $U$.
Then we have
\begin{eqnarray*}
U = \left\{ u(x,y) =
\left(
\begin{array}{ccc}
1 & x & y\\
0 & 1 & -\overline{x}\\
0 & 0 & 1
\end{array}
\right)\, 
\Bigg|\,
x, y \in E,\
y+\overline{y} +x\overline{x} = 0
\right\}
\end{eqnarray*}
and
\begin{eqnarray*}
\hat{U} = \left\{ \hat{u}(x,y) =
\left(
\begin{array}{ccc}
1 & 0 & 0\\
x & 1 & 0\\
y & -\overline{x} & 1
\end{array}
\right)\, 
\Bigg|\,
x, y \in E,\
y+\overline{y} +x\overline{x} = 0
\right\}.
\end{eqnarray*}

We fix a non-trivial additive character $\psi_E$ of $E$
with conductor $\ri_E$
and define a character $\psi$ of $U$ by
\[
\psi(u(x, y)) = \psi_E(x),\ \mathrm{for}\
u(x,y) \in U.
\]
We say that
a smooth representation $\pi$ of $G$ is {\it generic}
if $\mathrm{Hom}_U(\pi, \psi) \neq \{0\}$.
For an
irreducible admissible representation $\pi$
of $G$,
it is well-known that 
\[
\dim \mathrm{Hom}_U(\pi, \psi) \leq 1.
\]
If $(\pi, V)$ is an irreducible generic representation of $G$,
then
by Frobenius reciprocity, we have
\[
\mathrm{Hom}_G(\pi, \mathrm{Ind}_U^G \psi)
\simeq
\mathrm{Hom}_U(\pi, \psi) 
\simeq \C.
\]
So there exists a unique embedding of
$\pi$ into $\mathrm{Ind}_U^G \psi$
up to scalar.
The image $\mathcal{W}(\pi, \psi)$ of $V$ is called {\it the Whittaker model of} $\pi$.
By a non-zero functional $l \in \mathrm{Hom}_U(\pi, \psi)$,
which 
is called {\it the Whittaker functional},
we define {\it the Whittaker function} $W_v \in \mathcal{W}(\pi, \psi)$
associated to $v \in V$ by
\[
W_v(g) = l(\pi(g)v),\ g \in G.
\]

We set
\begin{eqnarray*}
& T_H = \left\{
t(a) = \left(
\begin{array}{ccc}
a & 0 & 0\\
0 & 1 & 0\\
0 & 0 & \overline{a}^{-1}
\end{array}
\right)\, \Bigg|\,
a \in E^\times
\right\}.
\end{eqnarray*}
Then the group $T_H$ is isomorphic to $E^\times$.
Let $(\pi, V)$ be an irreducible generic representation of $G$.
For each $v \in V$,
we can regard the restriction $W_v|_{T_H}$ of $W_v$ to $T_H$
as a locally constant function on $E^\times$.
Along the lines of the Kirillov theory for $\mathrm{GL}(2)$,
we see that there exists an integer $n$ such that
$\supp W_v|_{T_H} \subset \mi_E^{-n}$.
Moreover, if $v$ lies in $\langle \pi(u)w-w\ |\ u \in U,\ w \in V \rangle$,
then $W_v|_{T_H}$ is a compactly supported function on $E^\times$.

The conductor of a quasi-character $\mu_1$
of $E^\times$ is defined by
\[
c(\mu_1) = \mathrm{min}\{n \geq 0\ |\ \mu_1|_{(1+\mi_E^{n})\cap \ri_E^\times} = 1\}.
\]
We say that $\mu_1$ is {\it unramified}
if $c(\mu_1) = 0$.
We set open compact subgroups
of the norm-one subgroup $E^1$ of $E^\times$
as
\begin{eqnarray*}
& 
E^{1}_0 = E^{1},\
E^{1}_n = E^{1}\cap (1+\mi_E^n),\ \mathrm{for}\ n \geq 1.
\end{eqnarray*}
We define the conductor
of a character $\mu_2$ of $E^1$ by
\[
c(\mu_2) = \mathrm{min}\{n \geq 0\ |\ \mu_2|_{E^1_n} = 1\}.
\]
There exists an isomorphism between $E^1$ and
the center $Z$ of $G$ given by
\[
\iota: E^1 \simeq Z;
\lambda \mapsto
\left(
\begin{array}{ccc}
\lambda & & \\
& \lambda & \\
& & \lambda
\end{array}
\right).
\]
If a smooth representation $\pi$ of $G$
admits the central character $\omega_\pi$, then
we define 
{\it the conductor of} $\omega_\pi$ by
\[
n_\pi =  \mathrm{min}\{n \geq 0\ |\ \omega_\pi|_{Z_n} = 1\},
\]
where $Z_n = \iota(E^1_n)$ for $n \geq 0$.

We shall prepare the following lemma on the structure of $E^1$.
%%%%%%%%%%%
\begin{lem}\label{lem:E^1}
Suppose that a subgroup $H$ of $E^1$
contains the set $\{ (1-a\e)(1+a\e)^{-1}\ |\ a \in \ri_F\}$.
Then we have $H = E^1$.
\end{lem}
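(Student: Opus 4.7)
The plan is to prove $H = E^1$ by exhibiting every element of $E^1$ as a product of members of the given generating set $S = \{\phi(a)\ |\ a \in \ri_F\}$, where I abbreviate $\phi(a) = (1-a\e)(1+a\e)^{-1}$ for the Cayley-type transform.

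First I would parametrize $E^1$ via $\phi$. By Hilbert 90 every $\xi \in E^1$ has the form $\bar{z}/z$ for some $z = u + v\e \in E^\times$; when $u \neq 0$ this yields $\xi = \phi(v/u)$ with $v/u \in F$, and when $u = 0$ it yields $\xi = -1$. Hence $E^1 = \phi(F) \cup \{-1\}$. Next I would record the multiplication law, which a direct computation shows to be
\[
\phi(a_1)\phi(a_2) = \phi\!\left(\frac{a_1+a_2}{1+a_1 a_2 \epsilon}\right)
\]
when $1 + a_1 a_2 \epsilon \neq 0$, while $\phi(a_1)\phi(a_2) = -1$ in the degenerate case $1 + a_1 a_2 \epsilon = 0$.

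Using this, I would show that $H$ contains every $\phi(a)$ for $a \in F$ as well as $-1$. For $a \in \ri_F$ this holds by hypothesis. For $a \in F \setminus \ri_F$ I set $a_2 = (a-1)(1-a\epsilon)^{-1}$; since $\nu_F(a) < 0$ and $\epsilon \in \ri_F^\times$, the numerator and denominator have the same (negative) valuation, so $a_2 \in \ri_F^\times$. The multiplication law (with parameters $1, a_2$ in place of $a_1, a_2$) then gives $\phi(1)\phi(a_2) = \phi(a)$, so $\phi(a) \in H$. For the element $-1$, taking $a_1 = 1$ and $a_2 = -\epsilon^{-1} \in \ri_F$ puts us in the degenerate case, so $\phi(1)\phi(-\epsilon^{-1}) = -1 \in H$.

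Combining, $H \supseteq \phi(F) \cup \{-1\} = E^1$, which forces $H = E^1$. The only mildly subtle point is the treatment of $-1$, the sole element of $E^1$ not of the form $\phi(a)$; it enters $H$ via the degenerate case of the multiplication law, and once this is noticed the rest is routine algebra with the Cayley parametrization.
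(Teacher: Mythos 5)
Your proof is correct, and it takes a genuinely different route from the paper's. The paper argues indirectly: it quotes Morris's filtration result to identify $E^1_1 = \{(1-a\e)(1+a\e)^{-1} \mid a \in \mi_F\}$, so that $H \supseteq E^1_1$; it then passes to the cyclic quotient $E^1/E^1_1 \cong k_E^1$ of order $q+1$, shows (via a short computation) that the generators indexed by $\ri_F$ fall into at least $q$ distinct cosets mod $E^1_1$, and concludes $H/E^1_1 = E^1/E^1_1$ because a subgroup of a cyclic group of order $q+1$ with at least $q$ elements must be the whole group. Your proof instead works directly inside $E^1$: via Hilbert 90 you parametrize $E^1 = \phi(F) \cup \{-1\}$, derive the composition law $\phi(a_1)\phi(a_2) = \phi\bigl((a_1+a_2)/(1+a_1a_2\epsilon)\bigr)$, and then explicitly write each $\phi(a)$ with $a \notin \ri_F$ and the exceptional element $-1$ as a product of two generators with integral parameters. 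The paper's argument is shorter once one accepts Morris's theorem, and it makes the filtration structure of $E^1$ visible, which is in the spirit of the rest of the paper; yours is self-contained and more elementary, avoiding both the external reference and any appeal to the order of the residue quotient. Both are valid. One small point worth making explicit in your write-up: in the degenerate case of the multiplication law, $\phi(a_1)\phi(a_2) = -1$ requires $a_1 + a_2 \neq 0$, and the simultaneous vanishing of $1+a_1a_2\epsilon$ and $a_1+a_2$ would force $\epsilon^{-1}$ to be a square in $F$, which is excluded; in your specific application $a_1 + a_2 = 1 - \epsilon^{-1} \neq 0$ because $\epsilon \neq 1$.
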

%%%%%%%%%%
\begin{proof}
By \cite{Morris-2} Theorem 2.13 (c),
the group $E^1_1$ coincides with 
$\{ (1-a\e)(1+a\e)^{-1}\ |\ a \in \mi_F\}$.
So $H$ contains $E^1_1$ by assumption.
The quotient $E^1/E^1_1$
is isomorphic to the norm-one subgroup $k_E^1$
of $k_E^\times$,
and hence it
is a cyclic group of order $q+1$.
We claim that for $a, b \in \ri_F$,
$(1-a\e)(1+a\e)^{-1} \equiv (1-b\e)(1+b\e)^{-1} \pmod{E^1_1}$
implies
$a \equiv b \pmod{\mi_F}$.
Then
the group $H/E^1_1$ contains at least $q$-elements.
Thus we get $H/E^1_1 = E^1/E^1_1$,
whence $H = E^1$.

We shall prove the claim.
Suppose that two elements
$a, b$ in $\ri_F$ satisfy
$(1-a\e)(1+a\e)^{-1} \equiv (1-b\e)(1+b\e)^{-1} \pmod{E^1_1}$.
Then we have
$(1-a\e)(1+a\e)^{-1} - (1-b\e)(1+b\e)^{-1} \in \mi_E$.
Since 
$(1+a\e) (1+b\e)$ lies in $\ri_E^\times$,
we obtain
$(1-a\e)(1+b\e)-(1-b\e)(1+a\e) 
= 2(b-a)\e \in \mi_E$.
This means $b-a \in \mi_F$
since we are assuming that $F$ is of odd residual characteristic.
\end{proof}

%%%%%%%%%%%%%%%%%%%%%%%%%%%%%%%%%%%%%
%%%%%%%%%%%%%%%%%%%%%%%%%%%%%%%%%%%%%
\subsection{Newforms}\label{subsec:newform}
For each non-negative integer $n$,
we define an open compact subgroup $K_n$ of $G$
by
\begin{eqnarray*}
K_n
=
\left(
\begin{array}{ccc}
\ri_E & \ri_E &\mi^{-n}_E\\
\mi^n_E & 1+\mi^n_E & \ri_E\\
\mi^n_E & \mi^n_E & \ri_E
\end{array}
\right)
\cap G.
\end{eqnarray*}
We put
\begin{eqnarray*}
t_{n}
= \left(
\begin{array}{ccc}
 & & \p^{-n}\\
 & 1 & \\
 \p^{n} & &
\end{array}
\right) \in K_n.
\end{eqnarray*}
For a smooth representation $(\pi, V)$ of $G$,
we set
\begin{eqnarray*}
V(n) = \{v \in V\ |\ \pi(k)v= v,\ k \in K_n \},\ n \geq 0.
\end{eqnarray*}
%%%%%%%%%%
\begin{defn}[\cite{M} Definition 2.6]
Suppose that a smooth representation $(\pi, V)$ of $G$
has a non-zero $K_n$-fixed vector, for some $n \geq 0$.
We define {\it the conductor of} $\pi$ by
$N_\pi = \mathrm{min}\{n\geq 0\, |\, V(n) \neq \{0\}\}$.
We call $V(N_\pi)$ {\it the space of newforms for} $\pi$
and $V(n)$ {\it that of oldforms},
for $n > N_\pi$.
\end{defn}
We say that a smooth representation $\pi$ admits a newform
if $\pi$ has a non-zero $K_n$-fixed vector, for some $n \geq 0$.
%%%%%%%%%%%%%%
\begin{thm}[\cite{M} Theorems 2.8, 5.6]\label{thm:new}
(i) Every irreducible generic representation of $G$
admits a newform.

(ii) 
If an  irreducible admissible representation $(\pi, V)$
of $G$ admits a newform,
then $V(N_\pi)$ is one-dimensional.
\end{thm}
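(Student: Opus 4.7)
The proof divides naturally into existence in part (i) and multiplicity one in part (ii), which require quite different techniques.

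For part (i), my plan is to work entirely inside the Whittaker model $\mathcal{W}(\pi,\psi)$, which exists because $\pi$ is generic. Starting from an arbitrary non-zero Whittaker function $W$, I would use the Iwahori-type factorization $K_n = (K_n \cap \hat U)(K_n \cap T)(K_n \cap U)$ to analyze invariance factor-by-factor. Right-invariance under $K_n \cap U$ is automatic once $n$ is chosen so that $\psi$ becomes trivial on $K_n \cap U$, since $\psi$ has conductor $\ri_E$. For the remaining factors, I would apply the Kirillov-type support statement recalled in the excerpt, namely that $\supp W|_{T_H} \subset \mi_E^{-n}$ for some $n$, in order to replace $W$ by a suitable right-translate $\pi(g) W$ by a diagonal element whose conjugation action on $K_n \cap \hat U$ contracts it into a very small subgroup on which $W$ is already invariant. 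A vector in $V$ averaging to the resulting Whittaker function on $K_n$ is then the desired non-zero $K_n$-fixed vector. The delicate point is absorbing the $K_n \cap \hat U$ factor via the Bruhat decomposition together with the element $t_n$, whose form is tailor-made for this purpose.

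For part (ii), my plan is to prove that the Whittaker evaluation $V(N_\pi) \to \C$, $v \mapsto W_v(1)$, is injective when $\pi$ is generic; combined with $\dim \mathrm{Hom}_U(\pi,\psi) \le 1$ this immediately yields $\dim V(N_\pi) \le 1$, and part (i) gives the matching lower bound. Injectivity reduces to the assertion: if $v \in V(N_\pi)$ has $W_v(1) = 0$, then $v = 0$. I would use the Bruhat-Iwasawa-type decomposition $G = U T_H K_{N_\pi}$ to reduce to showing that $W_v|_{T_H}$ vanishes identically. The $\psi$-covariance of $W_v$ on the left and its $K_{N_\pi}$-invariance on the right together translate into strong multiplicative relations for the values $W_v(t(a))$, $a \in E^\times$; combined with the Kirillov support bound this forces $W_v|_{T_H}$ to be supported on a set smaller than the support characterising level $N_\pi$, contradicting minimality of $N_\pi$ unless $W_v \equiv 0$. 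At this stage Lemma~\ref{lem:E^1} is expected to be decisive: the $K_n$-invariance supplies invariance of $W_v$ under translation by elements of the form $(1 - a\e)(1 + a\e)^{-1}$ acting through $T_H$, and Lemma~\ref{lem:E^1} upgrades this to full $E^1$-invariance, collapsing the support still further. For non-generic $\pi$ admitting a newform, I would instead realise $\pi$ as an irreducible subquotient of a parabolically induced representation and bound $\dim V(N_\pi)$ from above by analyzing the double cosets $B \backslash G / K_n$ via Bernstein-Zelevinsky, then use minimality of $N_\pi$ to cut the bound down to one.

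The main obstacle is the injectivity in part (ii): the combinatorial analysis of how $K_{N_\pi}$-invariance and $U$-covariance interact on $T_H$ is genuinely intricate, and it is precisely where the structural Lemma~\ref{lem:E^1} on generators of $E^1$ plays its role. The existence statement in (i) is standard Whittaker-model machinery once one has the correct Iwahori decomposition of $K_n$, but verifying that decomposition for the specific family $\{K_n\}$ — and in particular that $K_n \cap \hat U$ behaves well under conjugation by $t_n$ — is the technical heart of that part.
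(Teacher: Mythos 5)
You should first note that the paper you are working from does not prove this statement at all: Theorem~\ref{thm:new} is imported verbatim from \cite{M} (Theorems 2.8 and 5.6), so there is no internal proof to compare against and your proposal has to stand on its own. As it stands it has genuine gaps. For part (i), the family $\{K_n\}$ is not a shrinking congruence family: the $(1,3)$-entry of $K_n$ is $\mi_E^{-n}$, so $K_n\cap U$ \emph{grows} with $n$ while $K_n\cap\hat U$ and the $E^1$-part of $K_n\cap T$ shrink. A diagonal conjugation that contracts $K_n\cap\hat U$ therefore expands $K_n\cap U$ by the inverse factor, so the translate-and-contract step cannot arrange invariance under all three Iwahori factors at once; moreover ``right-invariance under $K_n\cap U$ is automatic'' conflates the left $\psi$-covariance $W(ug)=\psi(u)W(g)$ with right invariance, and the real crux of any averaging argument --- showing that the average of $W$ over $K_n$ is non-zero, typically by evaluating $\int_{K_n}W(tk)\,dk$ at a well-chosen torus element --- is never addressed.

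For part (ii) there are three problems. First, proving that $v\mapsto W_v(1)$ is injective on $V(N_\pi)$ is the test-vector property, strictly stronger than multiplicity one; in the actual development it is obtained only with substantial extra input (\cite{M} Theorem 4.12 requires $N_\pi\geq 2$ and $N_\pi>n_\pi$, and the remaining cases are exactly what the present paper proves in Proposition~\ref{prop:test}, using multiplicity one, the operator $\theta'$, and the explicit non-supercuspidal newforms as ingredients). In particular, for $N_\pi=0$ the non-vanishing $W_v(1)\neq 0$ is the Casselman--Shalika theorem \cite{CS}, which no soft support-and-invariance manipulation of the kind you sketch will deliver; so your route is either circular relative to the known chain of results or leaves its key step unproved. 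Second, the decomposition $G=UT_HK_{N_\pi}$ you invoke is false once $N_\pi\geq 2$: by Lemma~\ref{lem:coset} the double coset space $B\backslash G/K_n$ has $\lfloor n/2\rfloor+1$ elements, so vanishing of $W_v|_{T_H}$ does not force $W_v\equiv 0$, and the reduction to $T_H$ collapses. Third, statement (ii) concerns \emph{every} irreducible admissible representation admitting a newform, not only generic ones; your fallback of embedding a non-generic $\pi$ into a parabolically induced representation is unavailable for non-generic supercuspidal representations, so that case (showing such $\pi$ admits no newform, or that its $V(N_\pi)$ is still one-dimensional) is left untreated.
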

%%%%%%%%%%%%%%%%%%%
\begin{rem}
Suppose that a smooth representation $\pi$ of $G$ 
admits the central character $\omega_\pi$.
If $\pi$ has a non-zero $K_n$-fixed vector,
then $\omega_\pi$ is trivial on $Z_n = Z\cap K_n$.
So we get
\begin{eqnarray*}
N_\pi \geq n_\pi.
\end{eqnarray*}
\end{rem}

In \cite{M},
two level raising operators, which are inspired by those in \cite{RS},
played an important role
to investigate $K_n$-fixed vectors.
The first one
$\eta: V(n) \rightarrow V(n+2)$ 
is given by
\begin{eqnarray*}
\eta v = \pi(\zeta^{-1}) v,\ v \in V(n),
\end{eqnarray*}
where
\begin{eqnarray*}
\zeta = 
\left(
\begin{array}{ccc}
\p & & \\
& 1 & \\
& & \p^{-1}
\end{array}
\right).
\end{eqnarray*}
The second level raising operator $\theta': V(n) \rightarrow V(n+1)$
is defined by
\begin{eqnarray*}
\theta' v 
= \frac{1}{\mathrm{vol}(K_{n+1}\cap K_n)}
\int_{K_{n+1}} \pi(k) v dk,\ v \in V(n).
\end{eqnarray*}
By \cite{M} Proposition 3.3,
we have
\begin{eqnarray}\label{eq:theta}
\theta' v
= \eta v +
\sum_{x \in \mi_F^{-1-n}/\mi_F^{-n}}
\pi\left(
\begin{array}{ccc}
1 & & x\e\\
 & 1 & \\
  &  & 1
\end{array}
\right)v,\ v \in V(n).
\end{eqnarray}

If newforms for $\pi$ are  test vectors for the Whittaker functional,
we have the following dimension formula of oldforms:
%%%%%%%%%%%
\begin{prop}[\cite{M} Theorem 5.8]\label{prop:oldform}
Let $(\pi, V)$ be an irreducible generic representation of $G$.
Suppose that $W_v(1) \neq 0$ for all
non-zero elements  $v$ in $V(N_\pi)$.
Then, for $n \geq N_\pi$,
the set
$\{\theta^{'i} \eta^j v\ |\ i+2j+N_\pi = n\}$
constitutes a basis for $V(n)$.
In particular,
\[
\dim V(n) = \left\lfloor \frac{n-N_\pi}{2}\right\rfloor +1.
\]
\end{prop}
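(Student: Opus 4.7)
Plan. The proof of Proposition~\ref{prop:oldform} naturally splits into (a) linear independence of the prescribed set $\{\theta^{'i}\eta^j v : i+2j+N_\pi=n\}$ and (b) the matching upper bound $\dim V(n)\leq\lfloor(n-N_\pi)/2\rfloor+1$; together these force the set to be a basis. I would work throughout in the Whittaker model, using the restriction of Whittaker functions to the one-parameter torus $T_H$ as the central analytic object.

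For (a), observe that $t(\ri_E^\times)\subset K_{N_\pi}\cap T_H$, so $f(a):=W_v(t(a))$ depends only on $\dv{a}$; write $\tilde f(k)=f(\p^k)$. I compute the action of the level-raising operators on torus restrictions. Since $\eta=\pi(\zeta^{-1})$ with $\zeta=t(\p)$ and $t(a)\zeta^{-1}=t(a\p^{-1})$, one has $W_{\eta w}(t(a))=W_w(t(a\p^{-1}))$. For $\theta'$, the identity $t(a)u(0,y)t(a)^{-1}=u(0,a\bar a y)$ together with the triviality of $\psi$ on $\{u(0,\cdot)\}$ (since $\psi(u(x,y))=\psi_E(x)$) shows that each of the $q$ summands in \eqref{eq:theta} contributes $W_w(t(a))$, yielding
\[
W_{\theta' w}(t(a))=W_w(t(a\p^{-1}))+q\,W_w(t(a)).
\]
Thus, on torus restrictions, $\eta$ acts as the shift $T\tilde g(k):=\tilde g(k-1)$ and $\theta'$ as $T+q$; iterating,
\[
\tilde f_{\theta^{'i}\eta^j v}(k)=\sum_{\ell=0}^{i}\binom{i}{\ell}q^{i-\ell}\,\tilde f(k-j-\ell).
\]
By the Kirillov-type support bound recalled in Section~1, $\tilde f$ vanishes for $k<k_{\min}$ for some $k_{\min}$; the hypothesis $W_v(1)\neq 0$ forces $k_{\min}\leq 0$ and $\tilde f(k_{\min})\neq 0$. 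Hence $\tilde f_{\theta^{'i}\eta^j v}$ vanishes for $k<k_{\min}+j$ and equals $q^{i}\tilde f(k_{\min})\neq 0$ at $k=k_{\min}+j$. In any non-trivial relation $\sum c_j\,\theta^{'i_j}\eta^j v=0$ with $i_j+2j=n-N_\pi$, evaluating torus restrictions at $k=k_{\min}+j_0$ for the smallest $j_0$ with $c_{j_0}\neq 0$ kills every other term and forces $c_{j_0}=0$ --- contradiction. Injectivity of $V\hookrightarrow\mathcal{W}(\pi,\psi)$ (genericity) then lifts this to $V(n)$.

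For (b), I would invoke Lemma~\ref{lem:theta} --- flagged in the preliminaries as the main tool and expected to supply injectivity of $\theta'$ on $V(m)$ under the present hypothesis --- together with the trivial injectivity of $\eta$ and $\dim V(N_\pi)=1$ from Theorem~\ref{thm:new}(ii). To close the bound, I would show that the torus-restriction map $v\mapsto\tilde f_v$ is injective on $V(n)$ (via genericity and a Bruhat-type decomposition adapted to $K_n$) and that its image is supported in a window of length $\lfloor(n-N_\pi)/2\rfloor+1$ on the cosets of $\ri_E^\times$ (an Iwahori--Kirillov support analysis using $K_n$-fixity). This matches the count from (a), giving spanning. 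The main obstacle is precisely this support-window estimate: the $(T,T+q)$ calculus makes part (a) routine, but confining $\tilde f_v$ to a tight window for every $v\in V(n)$ --- especially for irreducible generic supercuspidal representations where the induced-model calculations of the paper are unavailable --- requires a delicate interaction of genericity with the $K_n$-structure.
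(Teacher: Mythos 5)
This proposition is not proved in the present paper at all: it is quoted from \cite{M}, Theorem 5.8, so your attempt has to be measured against that argument and against the tools from \cite{M} that this paper imports. Your half (a) is correct and is essentially the standard calculus the paper itself uses: the identities $W_{\eta w}(t(a))=W_w(t(a\p^{-1}))$ and $W_{\theta' w}(t(a))=W_w(t(a\p^{-1}))+qW_w(t(a))$ (the latter appears verbatim in the proof of Lemma~\ref{lem:theta}), combined with the fact from \cite{M} Corollary 4.6 that $W_w|_{T_H}$ is $\ri_E^\times$-invariant with support in $\ri_E$, do give linear independence of $\{\theta'^{i}\eta^{j}v\,:\,i+2j+N_\pi=n\}$ exactly as you argue.

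The genuine gap is half (b), the upper bound $\dim V(n)\leq\lfloor(n-N_\pi)/2\rfloor+1$, and the route you sketch for it fails. The proposed support-window estimate is false: already for $n=N_\pi=0$ and $\pi$ an irreducible unramified principal series, the newform is the spherical vector and the Casselman--Shalika formula shows $W_v(t(\p^k))\neq 0$ for infinitely many $k\geq 0$ in general, so $W_v|_{T_H}$ is certainly not confined to one $\ri_E^\times$-coset. The injectivity of $v\mapsto W_v|_{T_H}$ on $V(n)$ is likewise unjustified: since $\mathrm{Card}(B\backslash G/K_n)=\lfloor n/2\rfloor+1$, a $\psi$-Whittaker function fixed by $K_n$ is determined by its values on all cosets $T\gamma_iK_n$, not on $T_H$ alone, and proving this injectivity is essentially of the same depth as the proposition itself. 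Invoking Lemma~\ref{lem:theta} does not help either: it assumes injectivity of $\theta'$ on $V(N_\pi)$ and yields the test-vector property, and injectivity of $\theta'$ and $\eta$ can only bound dimensions from below. The missing ingredient is the level-lowering statement \cite{M} Lemma 4.9 (quoted in the proof of Lemma~\ref{lem:theta}): if $w\in V(m)$ and $W_w|_{T_H}$ is supported in $\mi_E$, then $w\in\eta V(m-2)$. With it, spanning follows by induction on $n$: given $w\in V(n)$, your own calculus gives $W_{\theta'^{\,n-N_\pi}v}(1)=q^{\,n-N_\pi}W_v(1)\neq 0$, so one can subtract a multiple of $\theta'^{\,n-N_\pi}v$ from $w$ to make the value at $1$ vanish; by $\ri_E^\times$-invariance and support in $\ri_E$ the difference has torus restriction supported in $\mi_E$, hence lies in $\eta V(n-2)$, and induction (down to $V(N_\pi)$, one-dimensional, and $V(N_\pi-1)=\{0\}$) finishes the proof. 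Your attempt is missing exactly this $\eta$-divisibility step, which is the heart of \cite{M} Theorem 5.8.
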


It follows from \cite{M} Theorem 4.12 
that the assumption of Proposition~\ref{prop:oldform}
holds for irreducible generic representations of $G$ which satisfy
$N_\pi \geq 2$ and $N_\pi > n_\pi$.
One of the aims of this paper
is to prove the above dimension formula of oldforms
for all generic representations of $G$.
The following lemma gives a criterion
for newforms to be 
test vectors for the Whittaker functional.
%%%%%%%%%%%%%
\begin{lem}\label{lem:theta}
Let $(\pi, V)$ be an irreducible admissible representation of $G$ which admits a newform.
Suppose that $N_\pi \geq 1$
and the level raising operator $\theta': V(N_\pi) \rightarrow
V(N_\pi+1)$ is injective.
Then 

(i) $\pi$ is generic;

(ii) For all non-zero elements $v$ in $V(N_\pi)$,
we have
$W_v(1) \neq 0$.
\end{lem}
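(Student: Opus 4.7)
Since $V(N_\pi)$ is one-dimensional by Theorem~\ref{thm:new}(ii), fix a non-zero generator $v_0 \in V(N_\pi)$; the injectivity of $\theta' \colon V(N_\pi) \to V(N_\pi+1)$ is then equivalent to the single condition $\theta' v_0 \neq 0$. My plan is to prove both conclusions by contrapositive: if (i) or (ii) fails, I shall derive $\theta' v_0 = 0$.

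The main tool is formula~\eqref{eq:theta}, together with the key observation that $\psi(u(0, x\e)) = \psi_E(0) = 1$ and $\zeta^{-1} = t(\p^{-1}) \in T_H$. In the generic case, applying a non-zero Whittaker functional $\ell$ to~\eqref{eq:theta} gives
\[
\ell(\theta' v_0) = W_{v_0}(t(\p^{-1})) + q\, W_{v_0}(1),
\]
and, more generally, the same manipulation at $t(\p^m)$ yields the Kirillov-type recursion
\[
W_{\theta' v_0}(t(\p^m)) = W_{v_0}(t(\p^{m-1})) + q\, W_{v_0}(t(\p^m)) \qquad (m \in \mathbf{Z}).
\]
For conclusion~(ii), assume $\pi$ is generic and $W_{v_0}(1) = 0$; combining the recursion with a support statement for newforms, namely $W_{v_0}(t(\p^m)) = 0$ for $m < 0$ (which should follow from the $K_{N_\pi}$-invariance of $v_0$ together with the Kirillov-theoretic support recalled in Section~1), gives $W_{\theta' v_0}|_{T_H} \equiv 0$. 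An Iwasawa-type decomposition reducing arbitrary $g \in G$ to an element of $T_H$ modulo $U$ on the left and $K_{N_\pi+1}$ on the right, combined with the injectivity of $v \mapsto W_v$ on a generic representation, then yields $\theta' v_0 = 0$.

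For conclusion~(i), when $\pi$ is not generic the Whittaker identity is vacuous and a separate argument is needed. I plan to invoke the (short) classification of non-generic irreducible admissible representations of $G$ admitting a newform with $N_\pi \geq 1$, namely one-dimensional twists $\chi\circ\det$ and non-generic Langlands quotients of reducible principal series, and to verify $\theta' v_0 = 0$ case by case by direct computation in~\eqref{eq:theta}. The main obstacle is the support statement underpinning the argument for~(ii); while classical in spirit (compare Casselman's analysis~\cite{Casselman} for $\mathrm{GL}(2)$), its proof in the present setting requires a careful analysis of the $U$-$K_{N_\pi}$ double cosets and of the Kirillov model for~$G$, and the identification of precisely which $t(\p^m)$ lie in the support of $W_{v_0}$.
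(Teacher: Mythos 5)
Your overall strategy for~(ii) is close to the paper's, but there is a genuine error at a key step. You correctly derive the recursion
\[
W_{\theta' v_0}(t(\p^m)) = W_{v_0}(t(\p^{m-1})) + q\, W_{v_0}(t(\p^m)),
\]
and you correctly invoke the support fact that $W_{v_0}|_{T_H}$ is $\ri_E^\times$-invariant with support in $\ri_E$ (this is~\cite{M}~Corollary~4.6). But combining this with $W_{v_0}(1)=0$ does \emph{not} give $W_{\theta' v_0}|_{T_H}\equiv 0$. The recursion forces $W_{\theta' v_0}(t(\p^m))=0$ only for $m\leq 0$; for $m\geq 1$ the right-hand side involves $W_{v_0}(t(\p^{m-1}))$ and $W_{v_0}(t(\p^m))$, which need not vanish. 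What actually follows is merely that $\supp W_{\theta' v_0}|_{T_H}\subset\mi_E$. The paper then appeals to~\cite{M}~Lemma~4.9, a genuinely nontrivial structural input: if $w\in V(n+1)$ and $\supp W_w|_{T_H}\subset\mi_E$, then $w\in\eta V(n-1)$. Since $V(N_\pi-1)=\{0\}$, this gives $\theta' v_0=0$ and hence $v_0=0$. Your proposed substitute --- an Iwasawa-type decomposition plus injectivity of the Whittaker model --- cannot close this gap because the intermediate claim ($W_{\theta' v_0}|_{T_H}\equiv 0$) is false, and even for a vector whose Whittaker function does vanish on all of $T_H$ the reduction to $T_H$ is subtler than ``$U$ on the left, $K_{N_\pi+1}$ on the right'' since $B\backslash G/K_n$ has representatives $\gamma_i$ rather than elements of $T_H$.

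For~(i), your plan diverges substantially from the paper. The paper simply observes that injectivity of $\theta'$ on the one-dimensional space $V(N_\pi)$ forces $V(N_\pi+1)\neq\{0\}$, and then cites~\cite{M}~Theorem~5.10~(ii) (which constrains which representations can have $K_n$-fixed vectors at two consecutive levels) to conclude $\pi$ is generic. Your proposal to classify the non-generic irreducible admissible representations of $G$ admitting a newform with $N_\pi\geq 1$, and to verify $\theta' v_0=0$ case by case, is left entirely unexecuted and would amount to re-proving a significant portion of the newform theory of~\cite{M}. It is not wrong in principle, but it is not a proof; and the reliance on ``a (short) classification'' is exactly the kind of claim that needs to be substantiated rather than assumed. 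In both parts, then, the missing ingredients are precisely the two results from~\cite{M} (Lemma~4.9 and Theorem~5.10~(ii)) that the paper uses; without identifying them, the argument does not go through.
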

%%%%%
\begin{proof}
(i)
By assumption,
the space $V(N_\pi+1)$ is not zero.
Thus it follows from \cite{M} Theorem 5.10 (ii)
that $\pi$ must be generic.

(ii)
Due to \cite{M} Corollary 4.6,
for $v \in V(n)$,
the function $W_v|_{T_H}$ is $\ri_E^\times$-invariant
and its support is contained in  $\ri_E$.
Let $v$ be an element in $V(N_\pi)$
such that $W_v(1) = 0$.
By (\ref{eq:theta}),
we have
\[
W_{\theta' v}(t(a)) = W_v(t(a\p^{-1}))+ qW_v(t(a)),
\]
for $a \in E^\times$,
so that
 $W_{\theta' v}(1) = W_v(t(\p^{-1}))+qW_v(1) = 0$.
This implies that
the support of  $W_{\theta' v}|_{T_H}$ is  contained in $\mi_E$.
It follows from \cite{M} Lemma 4.9
that
$\theta' v$ lies in $\eta V(N_\pi-1) = \{0\}$.
So we get $v = 0$
by the injectivity of $\theta'$.
Now the proof is complete.
\end{proof}

\Section{Conductors of parabolically induced representations}
\label{sec:full}
%%%%%%%%%%%%%%%%%%%%%%%%%%%%%%%%%%
%%%%%%%%%%%%%%%%%%%%%%%%%%%%%%%%%%
In this section,
we determine the conductors and oldforms for
the parabolically induced representations of $G$.
Here we do not assume that they are irreducible.
We use the following notation for parabolically induced
representations of $G$.
Given a quasi-character $\mu_1$ of $E^\times$
and a character $\mu_2$ of $E^{1}$,
we define a quasi-character $\mu = \mu_1 \otimes \mu_2$ of $T$ by
\begin{eqnarray*}
\mu\left(
\begin{array}{ccc}
a & & \\
& b & \\
& & \overline{a}^{-1}
\end{array}
\right)
= \mu_1(a)\mu_2(b),\ \mathrm{for}\ a \in E^\times\ \mathrm{and}\ b \in E^{1}.
\end{eqnarray*}
We regard $\mu$ as a quasi-character of $B$ 
by extending
trivially on $U$.
Let $\pi = \mathrm{Ind}_B^G(\mu)$ be
the normalized parabolic induction.
Then the space $V$ of $\pi$
is that of locally constant functions $f: G \rightarrow \C$
which satisfy
\[
f(bg) = \delta_B(b)^{1/2}\mu(b)f(g),\ \mathrm{for}\
b \in B,\ g \in G,
\]
where $\delta_B$ is the modulus character of $B$.
We note that 
\begin{eqnarray*}
\delta_B\left(
\begin{array}{ccc}
a & & \\
& b & \\
& & \overline{a}^{-1}
\end{array}
\right)
= |a|_E^2,\ \mathrm{for}\ a \in E^\times,\ b \in E^{1}.
\end{eqnarray*}

For a non-negative integer $n$,
it follows from \cite{BZ} (2.24) that
 $V(n)$ 
is spanned by the functions
supported on $BgK_n$,
where $g$ runs over the elements in $B\backslash G/K_n$
such that $\mu$ is trivial on $B\cap g K_n g^{-1}$.
For any set $S$,
we denote by $\mathrm{Card}\, S$ the cardinality of $S$.
Thus we have
\[
\dim V(n) = \mathrm{Card}\{g \in B\backslash G/K_n\ |\
\mu|_{B\cap g K_n g^{-1}} = 1\}.
\]

We shall give
 a complete set of representatives for 
$B\backslash G/K_n$.
For any integer $i$,
we set
\begin{eqnarray*}
\gamma_i
=
\left(
\begin{array}{ccc}
1 & & \\
\p^i & 1 & \\
-\p^{2i}/2 & -\p^i & 1
\end{array}
\right) \in G.
\end{eqnarray*}
%%%%%%%%%%%%%
\begin{lem}\label{lem:coset}
For a non-negative integer $n$,
 a complete set of representatives for 
$B\backslash G/K_n$
is given by $\lfloor \frac{n}{2}\rfloor +1$ elements
\[
\gamma_i,\ \left\lceil \frac{n}{2}\right\rceil \leq i \leq n.
\]
\end{lem}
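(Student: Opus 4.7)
My plan is to identify $B \backslash G$ with the set of isotropic lines in $(E^3, J)$: since $B$ is the stabilizer of $[e_1]$ in the transitive $G$-action on isotropic lines, the map $Bg \mapsto [g^{-1}e_1]$ is a bijection, and the right $K_n$-action on $B\backslash G$ corresponds to the $K_n$-orbit structure on lines. A direct computation gives $\gamma_i^{-1} e_1 = (1, -\p^i, -\p^{2i}/2)^T$, which I denote $v_i$. Thus the lemma reduces to showing that the $\lfloor n/2\rfloor +1$ lines $[v_i]$ for $\lceil n/2\rceil \le i \le n$ form a complete set of $K_n$-orbit representatives among isotropic lines.

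For disjointness, I would examine how $K_n$ can modify an isotropic vector $v$ normalized so that $v_1 = 1$. The lower unipotent part of $K_n$, consisting of $\hat u(x,y)$ with $x, y \in \mi^n_E$, shifts $v_2$ by $\mi^n_E$. The torus part of $K_n$ combined with overall $E^\times$-rescaling can multiply $v_2$ by any element of $\ri_E^\times$ (using surjectivity of the norm $N_{E/F}\colon \ri_E^\times \to \ri_F^\times$ together with Lemma~\ref{lem:E^1}). Hence $\nu_E(v_2)$ is a $K_n$-invariant as long as it is less than $n$, which separates the orbits of $v_i$ for $\lceil n/2 \rceil \le i \le n-1$; the orbit of $v_n$ is characterized instead by $v_2 \in \mi^n_E$, distinguishing it from the rest.

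For exhaustion, given any isotropic $v$, after rescaling I may assume $v_1 = 1$ (the exceptional case $v \propto e_3$ is brought to this form via $t_n \in K_n$). Set $i = \nu_E(v_2)$ (with $i = +\infty$ if $v_2 = 0$). If $i \ge n$, repeated use of lower unipotent elements of $K_n$ brings $v$ into the orbit of $v_n$. If $\lceil n/2 \rceil \le i \le n-1$, use the torus of $K_n$ together with $E^\times$-rescaling to force $v_2 = -\p^i$, and then use $\hat u(0, y)$ with $y \in \e\, \mi_F^n$ supplemented by upper-unipotent cascades to force $v_3 = -\p^{2i}/2$. If $0 \le i < \lceil n/2 \rceil$, apply $t_n \in K_n$, which sends $(1, v_2, v_3) \mapsto (\p^{-n} v_3, v_2, \p^n)$; after renormalization, the new second-coordinate valuation is $n - i \in [\lceil n/2 \rceil, n]$, reducing to the preceding cases.

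The main technical obstacle is the final normalization of $v_3$, especially its imaginary part in $\e F$. Lower unipotent elements of $K_n$ only provide shifts by $\e\, \mi_F^n$, so the full reduction requires combining torus adjustments (scaling by elements of $\ri_F^\times$ via $N_{E/F}$) with upper-unipotent cascades coming from $K_n$; Lemma~\ref{lem:E^1} enters to ensure that $E^1_n$ contains enough elements to realize these torus normalizations. The bound $\lceil n/2 \rceil$ is then explained by the reflection $t_n$: it identifies the orbit of index $i$ with that of index $n-i$, so we may always assume $i \ge n - i$, i.e., $i \ge \lceil n/2 \rceil$.
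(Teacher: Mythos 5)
Your approach is genuinely different from the paper's. The paper never proves disjointness of the double cosets directly: it first computes $\mathrm{Card}(B\backslash G / K_n) = \lfloor n/2 \rfloor + 1$ by a representation-theoretic shortcut (choosing an irreducible unramified principal series, invoking Casselman--Shalika to get a nonzero Whittaker value at $1$, and then applying Proposition~\ref{prop:oldform}), and only then proves exhaustion by an Iwahori-type factorization; counting plus exhaustion then forces distinctness. You instead try to classify $K_n$-orbits on isotropic lines directly. That is a reasonable alternative, but as written it has a real gap.

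The proposed invariant is not an invariant. Take $n=2$ and the isotropic vector $v = (1,\,-\p,\,-\p^2/2 + \e)^T$, which has $\nu_E(v_2) = 1 = \lceil n/2 \rceil$, so by your algorithm it should lie in the orbit of $v_1$. Applying $t_2 \in K_2$ gives $t_2 v = (-1/2 + \p^{-2}\e,\,-\p,\,\p^2)^T$, whose first coordinate has valuation $-2$; after renormalizing, the second and third coordinates have valuations $3$ and $4$ respectively, i.e.\ they both lie in $\mi_E^2$. Then $\hat u(w_2,w_3)\in K_2$ carries $e_1$ to this renormalized vector, so $[v]$ is in the $K_2$-orbit of $[e_1] = [v_2]$, not of $[v_1]$. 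This refutes the statement that $\nu_E(v_2)$ separates orbits when it is $< n$. The source of the problem is that you checked the invariance only against the lower-unipotent and diagonal parts of $K_n$; but $K_n$ also contains $t_n$ and upper-unipotent elements $u(x,y)$ with $y\in\mi_E^{-n}$, and when $\nu_E(v_3)$ is small these can push the renormalizing factor to negative valuation and scramble $\nu_E(v_2)$. Your $t_n$-reflection argument tacitly assumes $\nu_E(v_3) = 2\,\nu_E(v_2)$, which holds for the specific vectors $v_i$ but not for an arbitrary isotropic $v$ once you allow a nonzero imaginary part of $v_3$.

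Correspondingly, the exhaustion step is not carried out. After normalizing $v_1=1$, $v_2=-\p^i$, the third coordinate is $-\p^{2i}/2 + a\e$ with $a\in F$ unconstrained, and your available shifts $\hat u(0,b\e)$, $b\in\mi_F^n$, only move $a$ modulo $\mi_F^n$. The appeal to ``torus adjustments via $N_{E/F}$'' and ``upper-unipotent cascades'' is not a proof, and the example above shows that in general the reduction to $a\equiv 0$ is in fact impossible: such a $v$ is not in the orbit of $v_i$. To make a direct orbit argument work you would need a finer invariant than $\nu_E(v_2)$, one that also tracks the size of $v_3$ (equivalently, membership of $(v_2,v_3)$ in the lattices preserved by $K_n$), and you would need to verify it against a full generating set of $K_n$, including the upper-unipotent part with poles and the element $t_n$.
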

%%%%%%%%%%%%%%
\begin{proof}
Firstly, we claim that 
$\mathrm{Card} (B\backslash G/K_n) = \lfloor \frac{n}{2}\rfloor +1$.
It follows \cite{Keys} that
there exists 
an unramified quasi-character $\mu_1$ of $E^\times$
such that
$\pi = \mathrm{Ind}_B^G \mu_1 \otimes 1$
is irreducible.
Clearly, $\mu_1 \otimes 1$ is trivial 
on $B\cap g K_n g^{-1}$
for all $g \in G$.
This means
$\mathrm{Card} (B\backslash G/K_n) =
\dim V(n)$.
The representation $\pi = \mathrm{Ind}_B^G \mu_1 \otimes 1$ is generic and unramified,
that is, $\pi$ has conductor zero.
Then \cite{CS} Theorem 5.4 says that
$W_v(1) \neq 0$
for every non-zero element $v$ in $V(0)$.
So by Proposition~\ref{prop:oldform}, we have
\[
\mathrm{Card} (B\backslash G/K_n) =
\dim V(n) = \left\lfloor \frac{n}{2}\right\rfloor +1,
\]
as required.

Secondly, we show that
every element in $G$ lies in $B\gamma_i K_n$,
for some 
$\left\lceil \frac{n}{2}\right\rceil \leq i \leq n$.
If $n =0$,
the assertion follows from the Iwasawa decomposition
$G = BK_0$.
The group $K_0/(1+M_3(\mi_E))\cap G$ is isomorphic to 
$\U(2,1)(k_E/k_F)$.
Using the Bruhat decomposition
of $\U(2,1)(k_E/k_F)$,
we get 
$K_0 = (B\cap K_0) W (K_0\cap K_1)$,
where 
$W = \{1, t_0\}$.
Thus we have
$G = B K_0 = BW (K_0\cap K_1) = BK_1$
since $t_0 \in Bt_1 \subset BK_1$.
This completes the proof for $n = 1$.
Suppose that $n \geq 2$.
Set $I = Z(K_0\cap K_1)$.
Then $I$ is the standard Iwahori subgroup of $G$
and have the Iwahori decomposition $I = 
(I\cap \hat{U})(I\cap T)(I\cap U)$.
So we get 
$G = BWI = B(I\cap \hat{U}) \cup Bt_0 (I\cap U)$,
and hence
$G = G\zeta^j =  B(I\cap \hat{U})\zeta^j\cup Bt_0 (I\cap U)\zeta^j$,
where $j = \lfloor \frac{n}{2}\rfloor$.
Set
\[
\hat{U}(\mi_E^{\lceil \frac{n}{2}\rceil })
= 
\left(
\begin{array}{ccc}
1 & & \\
\mi_E^{\lceil \frac{n}{2}\rceil }& 1 & \\
\mi_E^n&\mi_E^{\lceil \frac{n}{2}\rceil } & 1
\end{array}
\right)\cap G.
\]
We shall claim that
$G = B \hat{U}(\mi_E^{\lceil \frac{n}{2}\rceil })K_n$.
Clearly,
$B(I\cap \hat{U})\zeta^j =B\zeta^{-j}(I\cap \hat{U})\zeta^j$
is contained in $B \hat{U}(\mi_E^{\lceil \frac{n}{2}\rceil })K_n$.
Moreover, we have
$Bt_0 (I\cap U)\zeta^j \subset 
Bt_0 (I\cap U)\zeta^jK_n = 
B\zeta^{j-n}t_0 (I\cap U)\zeta^j t_n K_n
= Bt_{n-j} (I\cap U) t_{n-j} K_n \subset B \hat{U}(\mi_E^{\lceil \frac{n}{2}\rceil })K_n$.
So we obtain
$G = B \hat{U}(\mi_E^{\lceil \frac{n}{2}\rceil })K_n$,
as required.
Every element in $\hat{U}(\mi_E^{\lceil \frac{n}{2}\rceil })$
can be written as
$\hat{u}(y, a\e -y\overline{y}/2)$,
where $y \in \mi_E^{\lceil \frac{n}{2}\rceil}$
and $a \in \mi_F^n$.
Since
$\hat{u}(y, a\e -y\overline{y}/2)
= \hat{u}(y, -y\overline{y}/2)\hat{u}(0, a\e)$
and $\hat{u}(0, a\e) \in K_n$,
any element in $G$
belongs to $B \hat{u}(y,  -y\overline{y}/2) K_n$,
for some $y \in \mi_E^{\lceil \frac{n}{2}\rceil}$.
If $y$ lies in $\mi_E^n$, then the element
$\hat{u}(y,  -y\overline{y}/2)$ belongs to $K_n$.
Hence we have
$B \hat{u}(y,  -y\overline{y}/2) K_n = BK_n = B\gamma_n K_n$.
Suppose that $y$ lies in 
$\mi_E^{\lceil \frac{n}{2}\rceil}\backslash
\mi_E^n$.
Put $i = \nu_E(y)$,
where $\nu_E$ is the valuation on $E$ normalized so that
$\nu_E(\p) = 1$.
Then we have
$B \hat{u}(y, -y\overline{y}/2) K_n
= B\gamma_i K_n$
since $B$ and $K_n$ contain
$T_H\cap K_0$.
This completes the proof.
\end{proof}

The following lemma determines the condition on $i$
such that $\mu$ is trivial on $B\cap \gamma_i K_n \gamma_i^{-1}$.
%%%%%%%%%%%%%%%%%%%%%%%%%%%%%%%%%
\begin{lem}\label{lem:intertwine}
Let $i$ be an integer such that
$\left\lceil \frac{n}{2}\right\rceil \leq i \leq n$.
Then 
$\mu$ is trivial on 
$B\cap \gamma_i K_n \gamma_i^{-1}$
if and only if
\[
c(\mu_2) \leq 2i-n\ and\
c(\mu_1) \leq n-i.
\]
\end{lem}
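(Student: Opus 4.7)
Plan. Since $\mu$ is trivial on $U$, triviality of $\mu$ on $B \cap \gamma_i K_n \gamma_i^{-1}$ is equivalent to triviality of $\mu$ on the image of the projection $B = TU \to T$. The whole argument reduces to showing that this $T$-projection equals
\[
 H_i := \{t(a, \lambda) \,:\, a \in 1 + \mi_E^{n-i},\ \lambda \in E^1_{2i-n}\},
\]
which is a direct-product subgroup of $T \simeq E^\times \times E^1$; then $\mu_1 \otimes \mu_2$ is trivial on $H_i$ exactly when $c(\mu_1) \leq n - i$ and $c(\mu_2) \leq 2i - n$.

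The first step is the explicit matrix expansion of $\gamma_i^{-1} t(a, \lambda) u(x, y) \gamma_i$. The diagonal entries force $a \in \ri_E^\times$; the $(2,1)$, $(2,2)$, $(3,1)$ and $(3,2)$ entries carry factors $\p^i, 1, \p^{2i}$ and $\p^i$ respectively, so the $K_n$-condition says their bracketed cofactors lie in $\mi_E^{n-i}, \mi_E^n, \mi_E^{n-2i}$ and $\mi_E^{n-i}$. The $(3,1)$ condition is automatic from $i \geq n/2$, and the three remaining conditions read
\begin{align*}
(\lambda - a) + O(\p^i) &\in \mi_E^{n-i}, \\
(\lambda - 1) + (\lambda\bar x - ax)\p^i + ay\p^{2i} &\in \mi_E^n, \\
(\lambda - \bar a^{-1}) + O(\p^i) &\in \mi_E^{n-i}.
\end{align*}
Since $i \geq n - i$, the $O(\p^i)$ corrections lie in $\mi_E^{n-i}$, so the first and third collapse to $\lambda \equiv a \equiv \bar a^{-1} \pmod{\mi_E^{n-i}}$, forcing $N(a) \equiv 1 \pmod{\mi_F^{n-i}}$.

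For the inclusion $H_i \subseteq$ $T$-projection (sufficiency), given $(a, \lambda) \in H_i$, I would construct $u = u(x, y)$ with $t(a,\lambda)u \in \gamma_i K_n \gamma_i^{-1}$: after the relation $y + \bar y + x\bar x = 0$ pins $\alpha := \mathrm{Re}(y)$ to $-x\bar x/2$, the $\e$-component $\beta$ of $y = \alpha + \beta\e$ remains free and can be chosen to cancel the $\e$-component of $\lambda - 1$ in the $(2,2)$ constraint modulo $\mi_E^n$; the $F$-component matches automatically because $\lambda \in E^1$ yields $\mathrm{Re}(\lambda - 1) = -|\lambda - 1|^2/2$, of valuation at least $2(2i-n)$. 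For the reverse inclusion, given $b = t(a,\lambda)u(x,y) \in B \cap \gamma_i K_n \gamma_i^{-1}$, the first and third conditions give $a \equiv \bar a^{-1} \pmod{\mi_E^{n-i}}$, and combined with $\lambda \equiv a$ and $\lambda \in E^1$, an application of Lemma~\ref{lem:E^1} lifts these congruences to $a \in 1 + \mi_E^{n-i}$. The $(2,2)$ constraint, separated into $F$- and $\e$-components and using that only the $\e$-part of $ay\p^{2i}$ is adjustable (the $F$-part being pinned by $x\bar x$), then forces $\lambda - 1 \in \mi_E^{2i-n}$, i.e.\ $\lambda \in E^1_{2i-n}$.

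The main obstacle is the $F$-versus-$\e$ bookkeeping in the $(2,2)$ entry and the extraction of the precise exponent $2i - n$: one must carefully track how the freedom from $u$ permits cancellation in the $(2,2)$ entry down to $\mi_E^{2i-n}$ but no further, using both the identity $\mathrm{Re}(\lambda - 1) = -|\lambda - 1|^2/2$ coming from $\lambda\bar\lambda = 1$ and the fact that $x\bar x \in F$, along with the odd-residue-characteristic hypothesis so that $2 \in \ri_F^\times$.
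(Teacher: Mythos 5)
Your overall strategy is the same as the paper's: reduce to computing the $T$-projection of $B \cap \gamma_i K_n\gamma_i^{-1}$ and then compare it with $(1+\mi_E^{n-i}) \times E^1_{2i-n}$.  But both halves of your argument have real gaps.

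For the containment ``projection $\subseteq H_i$'', the congruences you extract from the $(2,1)$, $(2,2)$, $(3,2)$ entries --- $\lambda \equiv a \equiv \bar a^{-1}\pmod{\mi_E^{n-i}}$ together with $\lambda\bar\lambda = 1$ --- do not force $a \in 1+\mi_E^{n-i}$; for instance they are satisfied by $a \equiv \lambda \equiv -1$ when $n-i=1$.  To rule such cases out one has to bring in the $(1,2)$ and $(2,3)$ constraints (which eventually contradict, e.g., $a \equiv -1$ when $p \ne 2$).  Your appeal to Lemma~\ref{lem:E^1} at this point is misplaced: that lemma says a subgroup of $E^1$ containing $\{(1-a\e)(1+a\e)^{-1} : a\in\ri_F\}$ must be all of $E^1$; it does not ``lift congruences'' on $a\in E^\times$, and it plays a completely different role in the paper (closing the case $2i-n=0$ of the surjectivity onto $E^1$).

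For the containment ``$H_i \subseteq$ projection'', your proposed construction of $u(x,y)$ fails.  You claim that after choosing $\beta$ to kill the $\e$-component of $\lambda - 1$ modulo $\mi_E^n$, the $F$-component ``matches automatically'' because $\mathrm{Re}(\lambda-1) = -|\lambda-1|^2/2$ has valuation at least $2(2i-n)$.  But $2(2i-n) \ge n$ only when $i \ge 3n/4$; in the range $\lceil n/2\rceil \le i < 3n/4$ the $F$-component of $\lambda - 1$ need not lie in $\mi_F^n$, so it must be cancelled using $x$ (through $\mathrm{Re}(y) = -x\bar x/2$ and the $\p^i(\lambda\bar x - ax)$ term), which then feeds back into the $\e$-part --- the bookkeeping you gesture at is not carried out, and it is not at all clear it closes.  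The paper sidesteps this entirely: it builds explicit elements of $B \cap \gamma_i K_n\gamma_i^{-1}$ with $(1,1)$-entry $1$ and $(2,2)$-entry $(1-c\e)(1+c\e)^{-1}$ via $g = (1-X)(1+X)^{-1}$ where $X$ satisfies $\sigma(X) = -X$, so that $1\pm X \in \gamma_i(1+\mathcal{K})^\times\gamma_i^{-1}$ for free and $g \in G$ automatically; then it invokes Morris's description of $E^1_{m}$ (when $m = 2i-n>0$) or Lemma~\ref{lem:E^1} (when $2i-n=0$) to see that these $(2,2)$-entries generate all of $E^1_{2i-n}$, and treats the $\mu_1$-part separately via $t(a)$ for $a\in 1+\mi_E^{n-i}$.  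Your proposal contains neither the $\sigma$-trick nor the Morris/Lemma~\ref{lem:E^1} input, and the direct construction you substitute for them does not work as stated.
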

%%%%%%%%%
\begin{proof}
We can write an element $g$ in $B = TU$
as
\[
g = \left(
\begin{array}{ccc}
a & & \\
& b&\\
& & \overline{a}^{-1}
\end{array}
\right)
\left(
\begin{array}{ccc}
1 & x & y\\
& 1 & -\overline{x}\\
& & 1
\end{array}
\right)
=
\left(
\begin{array}{ccc}
a & ax &a y\\
& b & -b\overline{x}\\
& & \overline{a}^{-1}
\end{array}
\right),\
\]
where 
$a \in E^\times$, $b \in E^{1}$
and
$x, y \in E$ such that $y+\overline{y}+x\overline{x} = 0$.
It is easy to observe that
$a \in (1+\mi_E^{n-i})\cap \ri_E^\times$ and $b \in E^1_{2i-n}$
if $g \in \gamma_i K_n \gamma_i^{-1}$.
This implies that
$\mu$ is trivial on 
$B\cap \gamma_i K_n \gamma_i^{-1}$
if 
$c(\mu_2) \leq 2i-n$ and
$c(\mu_1) \leq n-i$.

Suppose that 
$\mu$ is trivial on 
$B\cap \gamma_i K_n \gamma_i^{-1}$.
For all $a \in (1+\mi_E^{n-i})\cap \ri_E^\times$,
the element $t(a)$ belongs to 
$B\cap \gamma_i K_n \gamma_i^{-1}$.
So we get
$c(\mu_1) \leq n-i$.
We  claim that
for each $b$ in $E^1_{2i-n}$,
there exists an element $g$ in $B\cap \gamma_i K_n \gamma_i^{-1}$ whose $(2,2)$-entry is $b$.
Then we get $\mu_2(b) =\mu(g) = 1$
because we have seen that 
the $(1,1)$-entry of $g \in B\cap \gamma_i K_n \gamma_i^{-1}$ lies in $(1+\mi_E^{n-i})\cap \ri_E^\times$
and $c(\mu_1) \leq n-i$.
So we obtain $c(\mu_2) \leq 2i-n$,
which completes the proof.

We shall show the claim.
Set 
\[
\mathcal{K}
=
\left(
\begin{array}{ccc}
\ri_E & \ri_E &\mi^{-n}_E\\
\mi^n_E & \mi^n_E & \ri_E\\
\mi^n_E & \mi^n_E & \ri_E
\end{array}
\right).
\]
We define an involution $\sigma$ on $M_3(E)$
by $\sigma(X) = J {}^t \overline{X}J$, for $X \in M_3(E)$.
Then we have $G = \{g \in \mathrm{GL}_3(E)\ |\ g \sigma(g) = 1\}$
and $K_n = (1+\mathcal{K})\cap G$.
For an element $a$ in $\mi_F^{2i-n}$,
we set
\[
X
=
\left(
\begin{array}{ccc}
0 & \p^{-i}a\e & \p^{-2i}a\e\\
0 & a\e & \p^{-i}a\e\\
0 & 0 & 0
\end{array}
\right).
\]
Then one can observe that 
$1-X$ lies in $\gamma_i (1 + \mathcal{K})^\times \gamma_i^{-1}$.
Since the group $\gamma_i (1 + \mathcal{K})^\times \gamma_i^{-1}$
is $\sigma$-stable,
the element $1+X = \sigma(1-X)$ belongs to 
$\gamma_i (1 + \mathcal{K})^\times \gamma_i^{-1}$.
We put $g = (1-X)(1+X)^{-1}$.
Then we see that
$g$ is an element in $B\cap \gamma_i K_n \gamma_i^{-1}$
whose $(2,2)$-entry is $(1-a\e)(1+a\e)^{-1}$.
Suppose that $2i-n > 0$.
Then \cite{Morris-2} Theorem 2.13 (c)
says that
every element in $E^1_{2i-n}$ has
the form $(1-a\e)(1+a\e)^{-1}$, for some $a \in \mi_F^{2i-n}$.
This completes the proof of the claim for $2i -n >0$.
Suppose that $2i-n = 0$.
Then we have showed that
the subgroup of $E^1$ consisting of the $(2,2)$-entries of elements in 
$B\cap \gamma_i K_n \gamma_i^{-1}$
contains $\{(1-a\e)(1+a\e)^{-1}\ |\ a \in \ri_F\}$.
Now the claim follows from Lemma~\ref{lem:E^1}.
\end{proof}

Here
we introduce functions in $\pi = \mathrm{Ind}_B^G \mu$,
which form a basis for $V(n)$.
%%%%%
\begin{defn}\label{defn:f}
For $\left\lceil \frac{n}{2}\right\rceil \leq i \leq n$,
we denote by $f_{n, i}$ the function in $V(n)$
which satisfies
$f_{n, i}(\gamma_{i}) = 1$ and $\supp f_{n, i} = B\gamma_i K_n$.
By Lemma~\ref{lem:intertwine}, $f_{n, i}$
is well-defined 
if and only if
$i$ satisfies
$c(\mu_2) \leq 2i-n$ and
$c(\mu_1) \leq n-i$.
\end{defn}

We shall determine the conductors
of the parabolically induced representations of $G$,
and give the dimension formula for 
the spaces of their oldforms.
%%%%%%%%%%%%%%%%%%%%%%%%%%%%%%%%%
\begin{thm}\label{thm:cond}
Let $\pi$ be a parabolically induced representation
$\mathrm{Ind}_B^G(\mu_1\otimes \mu_2)$
of $G$,
where 
$\mu_1$ is a quasi-character of $E^\times$
and $\mu_2$ is a character of $E^{1}$.
Then

(i)
$N_\pi = 2c(\mu_1) +c(\mu_2)$;

(ii) For $n \geq N_\pi$,
the functions
$f_{n, i}$, $\frac{n+c(\mu_2)}{2}\leq i \leq n-c(\mu_1)$ constitute a basis for $V(n)$.
In particular,
\[
\dim V(n) = \left\lfloor \frac{n-N_\pi}{2}\right\rfloor +
1.
\]
\end{thm}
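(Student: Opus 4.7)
The plan is to combine the two preceding lemmas with the Bernstein--Zelevinsky formula (2.24) from \cite{BZ} that was recalled just above. That formula supplies a basis of $V(n)$ indexed by the double coset representatives $g \in B \backslash G / K_n$ for which $\mu$ is trivial on $B \cap g K_n g^{-1}$, each basis vector being the unique $K_n$-fixed function supported on $BgK_n$ up to scalar. I will take $g = \gamma_i$ with $\lceil n/2 \rceil \leq i \leq n$ from Lemma~\ref{lem:coset}, and use Lemma~\ref{lem:intertwine} to characterize the valid indices as those $i$ satisfying both $c(\mu_2) \leq 2i - n$ and $c(\mu_1) \leq n - i$. The functions $f_{n,i}$ of Definition~\ref{defn:f} are then exactly the corresponding basis vectors, and their linear independence is automatic since they are supported on disjoint $B$-double cosets.

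The two inequalities above rewrite as $i \geq (n + c(\mu_2))/2$ and $i \leq n - c(\mu_1)$. Since $c(\mu_1), c(\mu_2) \geq 0$, these already force $\lceil n/2 \rceil \leq i \leq n$, so the coset-range condition coming from Lemma~\ref{lem:coset} is absorbed, and one obtains the asserted basis $\{f_{n,i} : (n + c(\mu_2))/2 \leq i \leq n - c(\mu_1)\}$ of $V(n)$.

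It remains to determine the smallest $n$ for which the interval $[\lceil (n + c(\mu_2))/2 \rceil,\ n - c(\mu_1)]$ contains an integer, and to count the integers it contains for $n \geq N_\pi$. At $n = 2c(\mu_1) + c(\mu_2)$, both endpoints coincide at the integer $c(\mu_1) + c(\mu_2)$; at $n = 2c(\mu_1) + c(\mu_2) - 1$, the left endpoint evaluates to $c(\mu_1) + c(\mu_2)$ while the right endpoint is $c(\mu_1) + c(\mu_2) - 1$, so the interval is empty. This pins down $N_\pi = 2c(\mu_1) + c(\mu_2)$, giving (i). A short parity case split on $n - c(\mu_2) \pmod 2$, to handle the ceiling, then confirms in either case that the number of integers in the interval equals $\lfloor (n - N_\pi)/2 \rfloor + 1$, completing (ii).

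No substantive obstacle is anticipated: the genuine group-theoretic content has been packaged into Lemmas~\ref{lem:coset} and~\ref{lem:intertwine}, and what remains is purely combinatorial bookkeeping. The only step requiring any care is the parity split in the final dimension count, which is needed to unwind the ceiling $\lceil (n + c(\mu_2))/2 \rceil$ into a uniform closed-form expression.
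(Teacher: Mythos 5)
Your proposal is correct and follows exactly the paper's route: the paper's own proof of Theorem~\ref{thm:cond} consists of the single sentence that it follows from Lemmas~\ref{lem:coset} and~\ref{lem:intertwine}, and your write-up simply makes the resulting interval-counting explicit (correctly, including the parity split and the observation that $c(\mu_1),c(\mu_2)\ge 0$ absorbs the range condition from Lemma~\ref{lem:coset}).
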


\begin{proof}
The theorem follows from 
Lemmas~\ref{lem:coset} and \ref{lem:intertwine}.
\end{proof}

The representation $\pi = \mathrm{Ind}_B^G(\mu_1\otimes \mu_2)$ admits the central character $\omega_\pi$,
which is given by
\begin{eqnarray}\label{eq:cent_f}
\omega_\pi(\iota(b)) = \mu_1(b) \mu_2(b),\
b \in E^{1}.
\end{eqnarray}

%%%%%%%%%%%%%%%%%%%%%%%%%%%%%%%%%
\begin{prop}\label{prop:cent}
Let $\pi = \mathrm{Ind}_B^G(\mu_1 \otimes \mu_2)$
be a parabolically induced representation of $G$.
Then $N_\pi = n_\pi$
if and only if $c(\mu_1) = 0$.
If this is the case,
then we have $N_\pi = n_\pi = c(\mu_2)$.
\end{prop}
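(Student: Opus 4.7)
The proof follows immediately from combining Theorem~\ref{thm:cond}(i) with an analysis of the conductor $n_\pi$ of $\omega_\pi$ via the formula $\omega_\pi(\iota(b)) = \mu_1(b)\mu_2(b)$.

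First I would record $N_\pi = 2c(\mu_1)+c(\mu_2)$ from Theorem~\ref{thm:cond}(i) and rewrite $n_\pi$ as the conductor of the character $b\mapsto \mu_1(b)\mu_2(b)$ on $E^1$, using (\ref{eq:cent_f}) and the isomorphism $\iota: E^1\simeq Z$. Since $E^1_n = E^1\cap(1+\mi_E^n)$ is contained in $(1+\mi_E^n)\cap\ri_E^\times$, the restriction $\mu_1|_{E^1}$ has conductor at most $c(\mu_1)$, so
\[
n_\pi \leq \max\bigl(c(\mu_1|_{E^1}),\,c(\mu_2)\bigr)\leq c(\mu_1)+c(\mu_2).
\]

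For the ``if'' direction, assume $c(\mu_1)=0$. Then $\mu_1$ is trivial on $\ri_E^\times\supset E^1$, so $\omega_\pi\circ\iota = \mu_2$ and therefore $n_\pi = c(\mu_2)$. On the other hand $N_\pi = 2\cdot 0 + c(\mu_2) = c(\mu_2)$, so $N_\pi = n_\pi = c(\mu_2)$.

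For the ``only if'' direction, suppose $c(\mu_1)\geq 1$. Then
\[
N_\pi = 2c(\mu_1)+c(\mu_2) \geq c(\mu_1)+c(\mu_2)+1 > c(\mu_1)+c(\mu_2) \geq n_\pi,
\]
so $N_\pi \neq n_\pi$. This completes the argument.

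There is no real obstacle here: the statement is a bookkeeping consequence of Theorem~\ref{thm:cond}(i) together with the elementary fact that restricting a character of $\ri_E^\times$ to $E^1$ can only decrease the conductor. The only point requiring care is to argue that the restriction bound $c(\mu_1|_{E^1})\leq c(\mu_1)$ is strong enough: we do not need equality, because the strict inequality $2c(\mu_1)+c(\mu_2)>c(\mu_1)+c(\mu_2)$ already beats the crude upper bound $c(\mu_1)+c(\mu_2)$ on $n_\pi$ whenever $c(\mu_1)\geq 1$.
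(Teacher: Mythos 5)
Your proof is correct and follows essentially the same route as the paper: both hinge on the formula $N_\pi = 2c(\mu_1)+c(\mu_2)$ from Theorem~\ref{thm:cond}(i) together with an upper bound on $n_\pi$ coming from $\omega_\pi\circ\iota = \mu_1|_{E^1}\cdot\mu_2$. The paper uses the slightly sharper bound $n_\pi\leq\max\{c(\mu_1),c(\mu_2)\}$ where you settle for $n_\pi\leq c(\mu_1)+c(\mu_2)$, but as you note this cruder bound already suffices, so the difference is cosmetic.
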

%%%%%%%%%%%%%%%%%%%%%%%%%%%%%%%%%%
\begin{proof}
By (\ref{eq:cent_f}),
we have
$n_\pi \leq \max \{c(\mu_1), c(\mu_2)\}$.
Suppose that $N_\pi = n_\pi$.
Then by Theorem~\ref{thm:cond} (i),
we obtain
$\max \{c(\mu_1), c(\mu_2)\}
= 2c(\mu_1)+c(\mu_2)$,
whence $c(\mu_1) = 0$.
Conversely, suppose that $c(\mu_1) = 0$.
Then we have $\omega_\pi(\iota(b)) = \mu_2(b)$,
for $b \in E^1$.
So we get
$n_\pi = c(\mu_2) = N_\pi$
by Theorem~\ref{thm:cond} (i).
\end{proof}

\Section{Reducible case}\label{sec:red}
%%%%%%%%%%%%%%%%%%%%%%%%%%%%%%%%%%
%%%%%%%%%%%%%%%%%%%%%%%%%%%%%%%%%%
Every irreducible non-supercuspidal representation 
of $G$ is a subquotient of 
$\mathrm{Ind}_B^G \mu$,
for some quasi-character $\mu$ of $T$.
In this section,
we determine the conductors of 
the irreducible subquotients of 
$\mathrm{Ind}_B^G \mu$
in the case when $\mathrm{Ind}_B^G \mu$
is reducible.

%%%%%%%%%%%%%%%%%%%%%%%%%%%%%%%%%%
%%%%%%%%%%%%%%%%%%%%%%%%%%%%%%%%%%
\subsection{Reducibility of parabolically induced representations}
Suppose that a parabolically induced representation 
$\pi = \mathrm{Ind}_B^G \mu_1 \otimes \mu_2$ is reducible.
Due to \cite{Keys},
the reducibility of $\pi$
is determined by
the quasi-character $\widetilde{\mu}_1$ of $E^\times$ 
which is
given by
\[
\widetilde{\mu}_1(a) = \mu_1(a)\mu_2(\overline{a}/a),\
a \in E^\times.
\]
There exist the following three reducible cases:
\begin{enumerate}
\item[(R1)]
$\widetilde{\mu}_1 = |\cdot|_E^{\pm}$;

\item[(R2)]
$\widetilde{\mu}_1|_{F^\times} = \omega_{E/F} |\cdot|_F^{\pm}$,
where $\omega_{E/F}$ is the non-trivial character of $F^\times$
which is trivial on $N_{E/F}(E^\times)$;

\item[(R3)]
$\widetilde{\mu}_1|_{F^\times} = 1$
and $\widetilde{\mu}_1 \neq 1$.
\end{enumerate}
In all cases,
the length of 
$\pi$ is two.
We denote by $\pi_1$ the unique irreducible generic 
subquotient of $\pi$
and by $\pi_2$ the remaining one.
By Theorem~\ref{thm:new} (i),
$\pi_1$ admits a newform.
For $i = 1,2$, we write $V_i$ for the space of $\pi_i$.
By \cite{BZ} (2.4),
we obtain
\begin{eqnarray}\label{eq:red_decomp}
\dim V(n) = \dim V_1(n) + \dim V_2(n),\ n \geq 0.
\end{eqnarray}
So we have
$N_{\pi_1} \geq N_\pi$.
If $\pi_2$ also admits a newform,
then
we get $N_{\pi_2} \geq N_\pi$.
The representations
$\pi_1$ and $\pi_2$ have the same central character $\omega_\pi$,
and hence we have $n_\pi = n_{\pi_1}= n_{\pi_2}$.

%%%%%%%%%%%%%%%%%%%%%%%%%%%%%%%%%%
%%%%%%%%%%%%%%%%%%%%%%%%%%%%%%%%%%
\subsection{Ramified case}
We shall consider the case when $\mu_1$ is ramified.
By Theorem~\ref{thm:cond} (i)
and Proposition~\ref{prop:cent},
we have $N_\pi = 2c(\mu_1)+c(\mu_2) \geq 2$ and $N_\pi > n_\pi$.

%%%%%%%%%
\begin{prop}\label{prop:ram}
Suppose that $\mu_1$ is ramified.
Then 

(i) $\dim V_1(n) = \dim V(n)$  for all $n \geq 0$.
In particular, $N_{\pi_1} = N_\pi$;

(ii) $V_2(n) = \{0\}$ for all $n \geq 0$.
\end{prop}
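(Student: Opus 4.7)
The plan is to prove (ii), whence (i) follows from the exactness of $K_n$-invariants, namely (\ref{eq:red_decomp}). For $n < N_\pi$ we have $V(n) = 0$, so $V_2(n) = 0$ trivially; the work is for $n \geq N_\pi$.

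Under the ramification hypothesis on $\mu_1$, Theorem~\ref{thm:cond}(i) and Proposition~\ref{prop:cent} give $N_\pi = 2c(\mu_1) + c(\mu_2) \geq 2$ and $n_\pi < N_\pi$. Since $\pi_1$ and $\pi_2$ share the central character $\omega_\pi$, one has $n_{\pi_i} = n_\pi$ for $i=1,2$. The key reduction I aim for is that it suffices to prove $N_{\pi_1} = N_\pi$: once this is known, $\pi_1$ is an irreducible generic representation with $N_{\pi_1} \geq 2$ and $N_{\pi_1} > n_{\pi_1}$, so the remark after Proposition~\ref{prop:oldform} (i.e.\ \cite{M} Theorem 4.12) supplies the dimension formula
\[
\dim V_1(n) = \Big\lfloor \tfrac{n - N_\pi}{2}\Big\rfloor + 1
\]
for $n \geq N_\pi$, matching $\dim V(n)$ of Theorem~\ref{thm:cond}(ii), and hence (\ref{eq:red_decomp}) forces $V_2(n) = 0$.

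To prove $N_{\pi_1} = N_\pi$, I would argue by contradiction. If $V_1(N_\pi) = 0$, then because $\dim V(N_\pi) = 1$, spanned by $f_{N_\pi, i_0}$ with $i_0 = c(\mu_1) + c(\mu_2)$ from Theorem~\ref{thm:cond}(ii), the additivity (\ref{eq:red_decomp}) forces $\dim V_2(N_\pi) = 1$, so $\pi_2$ admits a newform with $N_{\pi_2} = N_\pi \geq 2$ and $N_{\pi_2} > n_{\pi_2}$. Since $\pi_2$ is not generic, the contrapositive of Lemma~\ref{lem:theta}(i) forces the operator $\theta': V_2(N_{\pi_2}) \to V_2(N_{\pi_2} + 1)$ to have a nontrivial kernel, so to vanish identically on the one-dimensional $V_2(N_\pi)$. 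Translated back through the inclusion $V_2 \subset V$ or the projection $V \twoheadrightarrow V_2$ (depending on which constituent is the subrepresentation), this gives the constraint that $\theta' f_{N_\pi, i_0}$ either vanishes in $V(N_\pi + 1)$ or lies entirely inside $V_1(N_\pi + 1)$.

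The main obstacle is the direct computation of $\theta' f_{N_\pi, i_0}$ that will contradict this constraint. Applying formula~(\ref{eq:theta}) unfolds $\theta' f_{N_\pi, i_0}$ into $\eta f_{N_\pi, i_0}$ plus a sum of unipotent translates; I would then expand the result in the basis $\{f_{N_\pi+1, j}\}$ of $V(N_\pi + 1)$ from Theorem~\ref{thm:cond}(ii), tracking double-coset representatives via the matrix expressions for $\gamma_i$ and $\zeta$ in the spirit of Lemma~\ref{lem:coset}. The technical heart of the argument lies in exhibiting a nonzero basis component of $\theta' f_{N_\pi, i_0}$ supported on a $\gamma_j$ whose double coset $B\gamma_j K_{N_\pi+1}$ cannot be absorbed by $V_1(N_\pi+1)$, thereby yielding the desired contradiction.
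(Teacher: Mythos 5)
Your reduction in the first two paragraphs is sound: if $N_{\pi_1} = N_\pi$ is established, then since $\pi_1$ is generic with $N_{\pi_1} = 2c(\mu_1) + c(\mu_2) \geq 2 > n_{\pi_1} = n_\pi$, the remark after Proposition~\ref{prop:oldform} (citing \cite{M} Theorem 4.12) gives $\dim V_1(n) = \lfloor (n - N_\pi)/2 \rfloor + 1$, which equals $\dim V(n)$ by Theorem~\ref{thm:cond}(ii), and (\ref{eq:red_decomp}) then kills $V_2(n)$. That part works.

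The gap is in the final step, and it is not a minor technicality — it is the entire load-bearing content. To rule out $N_{\pi_2} = N_\pi$ you correctly observe, via the contrapositive of Lemma~\ref{lem:theta}(i), that non-genericity of $\pi_2$ would force $\theta'$ to kill the one-dimensional space $V_2(N_\pi)$, and you propose to contradict this by showing $\theta' f_{N_\pi, i_0} \neq 0$ (or, depending on the sub/quotient structure, that its image is not absorbed into $V_1(N_\pi + 1)$). You then declare this computation to be the ``technical heart'' and do not carry it out. Note that it is genuinely harder than the analogous computation in Lemma~\ref{lem:theta_inj}: there $\mu_1$ is unramified, so the newform $f_{N_\pi, N_\pi}$ is supported on $BK_{N_\pi}$ and one can evaluate $\theta' f$ at the identity using formula (\ref{eq:theta}) directly. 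Here $\mu_1$ is ramified, so $i_0 = c(\mu_1)+c(\mu_2) < N_\pi$, the representative $\gamma_{i_0}$ is a nontrivial lower-unipotent element not in $K_{N_\pi}$, and you would have to trace the unipotent translates in (\ref{eq:theta}) across double cosets $B\gamma_j K_{N_\pi+1}$, deciding in each case whether the argument can be moved into $B$ and what character value it picks up. You also leave unresolved whether $\pi_2$ is a subrepresentation or a quotient, which changes what must be checked.

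The paper's proof bypasses all of this: since $\pi_2$ is non-generic, and a putative newform for $\pi_2$ would have $N_{\pi_2} \geq N_\pi \geq 2$ and $N_{\pi_2} \geq N_\pi > n_\pi = n_{\pi_2}$, this immediately contradicts \cite{M} Theorem 4.4 (a genericity criterion: an irreducible admissible representation admitting a newform with conductor $\geq 2$ and strictly larger than the central conductor is generic). So $\pi_2$ has no $K_n$-fixed vectors at all, giving (ii) and hence (i) by (\ref{eq:red_decomp}). Your route, if you insist on it, would in effect be re-proving a special case of that cited theorem by a computation you have not done; absent that computation, the proposal does not constitute a proof.
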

%%%%%%%%
\begin{proof}
Suppose that $\pi_2$ admits a newform.
Then we have $N_{\pi_2} \geq N_\pi \geq 2$
and $N_{\pi_2} \geq N_{\pi} > n_{\pi} = n_{\pi_2}$.
This contradicts \cite{M} Theorem 4.4.
So  we conclude that $\pi_2$  has no $K_n$-fixed vectors.
This implies (ii).
The assertion (i) follows from (ii) and
(\ref{eq:red_decomp}).
\end{proof}

%%%%%%%%%%%%%%%%%%%%%%%%%%%%%%
%%%%%%%%%%%%%%%%%%%%%%%%%%%%%%
\subsection{Unramified case}
The case when $\mu_1$ is unramified
is slightly complicated.
In this case, we have $N_\pi = n_\pi = c(\mu_2)$
by 
Proposition~\ref{prop:cent}.
Note that $\widetilde{\mu}_1$ agrees with $\mu_1$ on $F^\times$.
It is easy to observe that 
the conditions (R1)-(R3)
are equivalent to the followings 
when $\mu_1$ is unramified:
\begin{enumerate}
\item[(RU1)]
$\mu_1 = |\cdot|_E^{\pm}$ and
$\mu_2$ is trivial;

\item[(RU2)]
${\mu}_1|_{F^\times} = \omega_{E/F} |\cdot|_F^{\pm}$;

\item[(RU3)]
$\mu_1$ is trivial 
and $\mu_2$ is not trivial.
\end{enumerate}

The injectivity of $\theta'$
is a main tool in our investigation.
We determine the representations 
$\mathrm{Ind}_B^G \mu_1 \otimes \mu_2$
for which $\theta'$ is injective on $V(N_\pi)$.
%%%%%%%%%%%%
\begin{lem}\label{lem:theta_inj}
Let $\pi = \mathrm{Ind}_B^G \mu_1 \otimes \mu_2$ be a parabolically induced representation,
where $\mu_1$ is an unramified quasi-character of $E^\times$.
Then the level raising operator $\theta':  V(N_\pi) \rightarrow
V(N_\pi+1)$ is injective
if and only if $\mu_1|_{F^\times} \neq \omega_{E/F} |\cdot|_F^{-1}$.
\end{lem}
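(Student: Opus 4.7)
The plan is to exploit that both source and target of $\theta'$ are one-dimensional in this unramified-$\mu_1$ setting, so the operator collapses to a single scalar whose vanishing is easy to test. Since $c(\mu_1) = 0$, Theorem~\ref{thm:cond} gives $N_\pi = c(\mu_2) =: n$, and the index ranges in Theorem~\ref{thm:cond}(ii) force $V(n) = \C f_{n,n}$ and $V(n+1) = \C f_{n+1,n+1}$. Thus $\theta' f_{n,n} = c\, f_{n+1,n+1}$ for some $c \in \C$, and injectivity of $\theta'$ is equivalent to $c \neq 0$.

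To extract $c$ I would evaluate both functions at the identity. Since $\gamma_n \in K_n$ and $\gamma_{n+1} \in K_{n+1}$ (their nonzero below-diagonal entries sit in the required ideals, using that $2$ is a unit), right-$K$-invariance together with the transformation rule for normalized induction gives $f_{n,n}(1) = f_{n+1,n+1}(1) = 1$, so $c = (\theta' f_{n,n})(1)$. Formula~(\ref{eq:theta}) then writes this as $f_{n,n}(\zeta^{-1}) + \sum_{x \in \mi_F^{-1-n}/\mi_F^{-n}} f_{n,n}(u(0, x\e))$. The point is that $\zeta^{-1} \in T$ and each $u(0, x\e) \in U$ already lie in $B$, so no Iwasawa decomposition is needed: the induced-representation formula gives $f_{n,n}(\zeta^{-1}) = \delta_B(\zeta^{-1})^{1/2} \mu_1(\p)^{-1} = q^2 \mu_1(\p)^{-1}$, each unipotent term contributes $1$, and the sum has $q$ terms, so $c = q^2 \mu_1(\p)^{-1} + q$.

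It remains to identify $c = 0$, i.e.\ $\mu_1(\p) = -q$, with the condition $\mu_1|_{F^\times} = \omega_{E/F}|\cdot|_F^{-1}$. Both characters in question are unramified on $F^\times$ (the former because $\mu_1$ is trivial on $\ri_E^\times \supset \ri_F^\times$; the latter by construction, since $E/F$ is unramified), so they are determined by their values at $\p$, and $\omega_{E/F}(\p) |\p|_F^{-1} = -q$ closes the matching. I do not anticipate a real obstacle: the one-dimensionality delivered by Theorem~\ref{thm:cond} reduces the whole question to a single evaluation in which every term lives in $B$, and the only care needed is with the $\delta_B^{1/2}$ factor from normalized induction.
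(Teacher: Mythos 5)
Your proposal is correct and follows essentially the same route as the paper: reduce to the scalar $\theta' f_{N_\pi,N_\pi} = c\,f_{N_\pi+1,N_\pi+1}$, evaluate at the identity using formula~(\ref{eq:theta}) and the fact that $\zeta^{-1}$ and $u(0,x\e)$ lie in $B$, obtaining $c = q^2\mu_1(\p)^{-1}+q$, and then match $\mu_1(\p)=-q$ with $\mu_1|_{F^\times}=\omega_{E/F}|\cdot|_F^{-1}$ by noting both sides are unramified characters of $F^\times$.
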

%%%%%%%%%%%%
\begin{proof}
By Theorem~\ref{thm:cond},
we see that $N_\pi = c(\mu_2)$
and the function $f_{N_\pi, N_\pi}$ forms 
a basis for the one-dimensional space $V(N_\pi)$.
Thus,
every function in
$V(N_\pi)$ is
supported on $BK_{N_\pi}$.
This implies that
an element $f$ in $V(N_\pi)$
is not zero if and only if $f(1) \neq 0$.
Similarly, all elements in 
$V(N_\pi+1)$ are
supported on $BK_{N_\pi+1}$.
Hence $\theta'f \neq 0$ if and only if $\theta' f(1) \neq 0$.
By (\ref{eq:theta}), we have
\begin{eqnarray*}
\theta' f(1) & = & 
f(\zeta^{-1}) + \sum_{x \in \mi_F^{-1-n}/\mi^{-n}_F}f(u(0, x\e))\\
& = & 
|\p^{-1}|_E\mu_1(\p^{-1})f(1) + qf(1)\\
& = & 
q(q\mu_1(\p^{-1})+1)f(1).
\end{eqnarray*}
So $\theta'$ is injective if and only if
$\mu_1(\p) \neq -q$.
Because we are assuming that $\mu_1$ is unramified,
the condition $\mu_1(\p) \neq -q$ is equivalent to 
$\mu_1|_{F^\times} \neq \omega_{E/F} |\cdot|_F^{-1}$,
so this completes the proof.
\end{proof}

Firstly,
we consider the case (RU1).
Suppose that
$\mu_1 = |\cdot|_E^{\pm}$ and
$\mu_2$ is trivial.
Then we have $N_\pi = 0$
by Theorem~\ref{thm:cond} (i).
It is well-known that $\pi_1$ is the Steinberg representation of $G$
and $\pi_2$ is the trivial representation of $G$.
%%%%%%%%%%%%%
\begin{prop}\label{prop:St}
Suppose that $\mu_1$ and $\mu_2$ satisfy the condition (RU1).
Then

(i) $N_{\pi_1} = 2$
and
\[
\dim V_1(n) = \displaystyle \left\lfloor \frac{n-N_{\pi_1}}{2}\right\rfloor+1,\ n \geq N_{\pi_1};
\]

(ii)
$N_{\pi_2} = 0$ and
\[
\dim V_2(n) = 1\ for\ all\ n \geq 0.
\]
\end{prop}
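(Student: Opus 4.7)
The plan is to use the dimension decomposition (\ref{eq:red_decomp}) together with Theorem~\ref{thm:cond}, exploiting the standard fact that in case (RU1) the two Jordan--H\"older constituents of $\pi = \mathrm{Ind}_B^G(|\cdot|_E^{\pm 1} \otimes 1)$ are the Steinberg representation $\pi_1$ and the trivial representation $\pi_2$.

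First I would dispose of (ii). Since $\pi_2$ is the trivial representation of $G$, every vector in $V_2$ is fixed by every open compact subgroup; thus $V_2(n) = V_2$ is one-dimensional for every $n \geq 0$, which immediately yields $N_{\pi_2} = 0$ and $\dim V_2(n) = 1$ as claimed.

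For (i), I would apply Theorem~\ref{thm:cond} to $\pi$ itself. Since $c(\mu_1) = c(\mu_2) = 0$ in case (RU1), that theorem gives $N_\pi = 0$ and
\[
\dim V(n) = \left\lfloor \frac{n}{2} \right\rfloor + 1
\]
for all $n \geq 0$. Substituting this together with $\dim V_2(n) = 1$ into (\ref{eq:red_decomp}) yields
\[
\dim V_1(n) = \dim V(n) - \dim V_2(n) = \left\lfloor \frac{n}{2} \right\rfloor.
\]
This quantity vanishes for $n = 0, 1$ and equals $1$ for $n = 2$, so $N_{\pi_1} = 2$; for $n \geq 2$ the elementary identity $\lfloor n/2 \rfloor = \lfloor (n-2)/2 \rfloor + 1$ then matches the advertised formula for $\dim V_1(n)$.

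No part of this argument is delicate. The only substantive input beyond the already-established Theorem~\ref{thm:cond} and the exactness identity (\ref{eq:red_decomp}) is the identification of $\pi_2$ with the trivial representation in case (RU1), which is a classical consequence of the reducibility analysis of \cite{Keys} in the rank-one situation at hand; so there is no real obstacle to overcome.
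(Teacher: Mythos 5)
Your proof is correct and takes essentially the same approach as the paper: identify $\pi_2$ as the trivial representation (giving $\dim V_2(n)=1$ and part (ii)), then subtract from the dimension formula of Theorem~\ref{thm:cond}~(ii) via (\ref{eq:red_decomp}) to obtain $\dim V_1(n)=\lfloor n/2\rfloor=\lfloor (n-2)/2\rfloor+1$ and $N_{\pi_1}=2$. The elementary floor identity you invoke is exactly the arithmetic the paper elides.
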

%%%%%%%%%%%%%
\begin{proof}
Since $\pi_2$ is the trivial representation of $G$,
we have $\dim V_2(n) = 1$ for all $n \geq 0$.
This implies (ii).
For $n \geq 0$,
we get $\dim V_1(n) = \dim V(n)-1
= \left\lfloor \frac{n-2}{2}\right\rfloor+1$
by (\ref{eq:red_decomp})
and
Theorem~\ref{thm:cond} (ii).
In particular, we obtain $N_{\pi_1} = 2$.
This completes the proof of (i).
\end{proof}

Secondly,
we shall consider the case (RU2).
For the moment,
we fix our notation as follows.
Let $\mu_1$ be the unramified quasi-character of $E^\times$
such that
${\mu}_1|_{F^\times} = \omega_{E/F} |\cdot|_F^{-1}$
and let $\mu_2$ be a character of $E^1$.
We set $\pi = \mathrm{Ind}_B^G \mu$,
where $\mu = \mu_1 \otimes \mu_2$.
By  Lemma~\ref{lem:theta_inj},
the level raising operator $\theta'$ is zero on $V(N_\pi)$
since $\dim V(N_\pi) = 1$.
It is known that 
$(\pi, V)$ has the unique subrepresentation, which we denote 
by $(\tau, U)$.
The quotient representation $\rho = \pi/\tau$
on $W = V/U$
is the unique subrepresentation of $\pi^w = \mathrm{Ind}_B^G
\mu^w$,
where 
$\mu^w = {\overline{\mu_1}}^{-1} \otimes \mu_2$.
We write the space of $\pi^w$ as $V^w$.
Due to Theorem~\ref{thm:cond} (i),
we have
$N_\pi = N_{\pi^w} = c(\mu_2)$.
%%%%%%%%%%%%
\begin{lem}\label{lem:ru2_1}
With the notation as above,
we have
\[
\dim U(N_\pi) = \dim W(N_\pi+1) = 1,\
\dim U(N_\pi+1) = \dim W(N_\pi) = 0.
\]
In particular, both
$\tau$ and $\rho$ admit newforms
and those conductors are given by
$N_\tau = N_\pi$,
$N_\rho = N_\pi+1$.
\end{lem}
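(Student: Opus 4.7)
The plan is to exploit the standard long intertwining operator $A: \pi \to \pi^w$, whose kernel is $\tau$ and whose image is the subrepresentation $\rho$ of $\pi^w$, giving a $G$-equivariant embedding $j: W \hookrightarrow V^w$ with $V^w/j(W) \cong \tau \cong U$. Since $\theta'$ is defined by averaging over a compact subgroup, it commutes with every $G$-equivariant map. First I apply Theorem~\ref{thm:cond} to both $\pi$ and $\pi^w$ (noting $c(\overline{\mu_1}^{-1}) = c(\mu_1) = 0$) to obtain $N_\pi = N_{\pi^w} = c(\mu_2)$ and $\dim V(N_\pi) = \dim V(N_\pi+1) = \dim V^w(N_\pi) = \dim V^w(N_\pi+1) = 1$. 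Taking $K_n$-invariants of the short exact sequence $0 \to U \to V \to W \to 0$ then gives $\dim U(n) + \dim W(n) = 1$ for $n \in \{N_\pi, N_\pi+1\}$, so the task reduces to proving $\dim W(N_\pi) = 0$ and $\dim W(N_\pi+1) = 1$.

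The crucial input is Lemma~\ref{lem:theta_inj}. In case (RU2), $\mu_1|_{F^\times} = \omega_{E/F}|\cdot|_F^{-1}$, forcing $\theta'$ to vanish on the one-dimensional space $V(N_\pi)$; however $\mu^w_1|_{F^\times} = \omega_{E/F}|\cdot|_F$, so $\theta'$ is injective on $V^w(N_\pi)$, hence an isomorphism $V^w(N_\pi) \to V^w(N_\pi+1)$. I plan to use this single sign-flip twice, in opposite directions. For $\dim W(N_\pi) = 0$: if a nonzero $\xi \in W(N_\pi)$ existed with a lift $v \in V(N_\pi)$, then by $G$-equivariance $\theta'(j(\xi)) = j(\theta' \xi)$, and $\theta' \xi$ is the image in $W$ of $\theta' v = 0$; this forces $\theta'(j(\xi)) = 0$ despite $j(\xi) \neq 0$, contradicting injectivity of $\theta'$ on $V^w(N_\pi)$. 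For $\dim W(N_\pi+1) = 1$: I use the $G$-equivariant surjection $p: V^w \twoheadrightarrow V^w/j(W) \cong U$. If $v^w$ generates $V^w(N_\pi)$, then $p(v^w) \in U(N_\pi) = V(N_\pi)$ by the previous step, so $p(\theta' v^w) = \theta'(p(v^w)) = 0$; thus the nonzero vector $\theta' v^w \in V^w(N_\pi+1)$ lies in $\ker p = j(W)$, giving $\dim W(N_\pi+1) \geq 1$ and hence equality.

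The conductor identities follow because $V(n) = 0$ for $n < N_\pi$ by Theorem~\ref{thm:cond}(i), forcing $U(n) = W(n) = 0$ in that range, whence $N_\tau = N_\pi$ and $N_\rho = N_\pi+1$. The main obstacle is conceptual rather than computational: one has only a single source of asymmetry between $\pi$ and $\pi^w$ — namely the sign in the reducibility condition that separates $\mu_1$ from $\mu^w_1$ in Lemma~\ref{lem:theta_inj} — and must deploy it in complementary directions, pushing $W$ forward into $V^w$ to rule out a $K_{N_\pi}$-line in $W$, then pulling $V^w$ back onto $U$ through the opposite quotient map to force the $K_{N_\pi+1}$-line out of $U$ and into $W$.
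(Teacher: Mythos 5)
Your proof is correct and follows essentially the same route as the paper: both arguments hinge on the two short exact sequences $0\to U\to V\to W\to 0$ and $0\to W\to V^w\to U\to 0$, the fact that $\theta'$ commutes with the $G$-equivariant maps, and the sign-flip in Lemma~\ref{lem:theta_inj} which makes $\theta'$ vanish on $V(N_\pi)$ but be bijective on $V^w(N_\pi)$. The paper phrases the second half slightly differently (deducing $\dim U(N_\pi+1)=0$ from surjectivity and vanishing of $\theta'$, then subtracting) whereas you directly exhibit a nonzero vector of $W(N_\pi+1)$, but the underlying mechanism is identical.
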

%%%%%%%%%%%%
\begin{proof}
By Theorem~\ref{thm:cond} (ii),
we have
$\dim V(N_\pi) = \dim V(N_\pi+1) =1$.
By \cite{BZ} (2.4),
for $n \geq 0$,
we get an exact sequence
\[
0 \rightarrow U(n) \rightarrow V(n) \rightarrow W(n) 
\rightarrow 0.
\]
Under the identification $W = V/U$,
the factor map $\theta': V(N_{\pi})/U(N_{\pi}) \rightarrow
V(N_{\pi}+1)/U(N_{\pi}+1)$
coincides with 
$\theta': W(N_{\pi}) \rightarrow
W(N_{\pi}+1)$.
So $\theta'$ is zero on $W(N_{\pi})$
because $\theta'$ on $V(N_\pi)$ is zero.
Similarly,
the factor map $\theta': V^w(N_{\pi})/W(N_{\pi}) \rightarrow
V^w(N_{\pi}+1)/W(N_{\pi}+1)$
coincides with 
$\theta': U(N_{\pi}) \rightarrow
U(N_{\pi}+1)$.
By 
Lemma~\ref{lem:theta_inj},
the map $\theta': V^w(N_{\pi}) \rightarrow
V^w(N_{\pi}+1)$
is bijective
since $\dim V^w(N_\pi) = \dim V^w(N_\pi+1) =1$.
So the restriction of $\theta'$ to $W(N_{\pi})$ 
is injective
and
the factor map $\theta': U(N_{\pi}) \rightarrow
U(N_{\pi}+1)$
is surjective.
Because $\theta'$ on $W(N_{\pi})$ 
is injective and zero,
we have $\dim W(N_{\pi}) = 0$,
and hence $\dim U(N_\pi) = \dim V(N_\pi)
-\dim W(N_\pi) = 1$.
Comparing dimensions,
we obtain $U(N_\pi) = V(N_\pi)$.
Since $\theta': U(N_\pi) \rightarrow U(N_\pi+1)$
is surjective and zero,
we have $\dim U(N_\pi+1) = 0$,
so that $\dim W(N_\pi+1) 
= \dim V(N_\pi+1)
-\dim U(N_\pi+1) = 1$.
This completes the proof.
\end{proof}

Although the following lemma is not new,
we give a proof for the reader's convenience.
%%%%%%%%%%%%
\begin{lem}\label{lem:ru2_2}
With the notation as above,
$\rho$ is generic and $\tau$ is non-generic.
\end{lem}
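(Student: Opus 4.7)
The plan is to apply Lemma~\ref{lem:theta} to the quotient representation $\rho$. Since $\dim \mathrm{Hom}_U(\pi, \psi) \leq 1$, the principal series $\pi$ has a unique irreducible generic subquotient, so once $\rho$ is shown to be generic, non-genericity of $\tau$ follows automatically. By Lemma~\ref{lem:ru2_1}, $\rho$ admits a newform with $N_\rho = N_\pi + 1 \geq 1$ and $\dim W(N_\rho) = 1$. Hence Lemma~\ref{lem:theta} reduces the problem to showing that $\theta' : W(N_\rho) \to W(N_\rho + 1)$ is non-zero.

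To analyze this map, I would exploit the inclusion $\rho \hookrightarrow \pi^w = \mathrm{Ind}_B^G \mu^w$. Since $\dim W(N_\pi + 1) = 1 = \dim V^w(N_\pi + 1)$, the inclusion $W(N_\pi + 1) \subseteq V^w(N_\pi + 1)$ is an equality, and both are spanned by $f^w_{N_\pi + 1, N_\pi + 1}$. By $G$-equivariance of $\theta'$, it therefore suffices to show that $\theta' f^w_{N_\pi + 1, N_\pi + 1}$ is non-zero in $V^w(N_\pi + 2)$. To detect this, I would evaluate at the coset representative $\gamma_{N_\pi + 2}$: since $\gamma_{N_\pi + 2} \in K_{N_\pi + 2}$ while $\gamma_{N_\pi + 1} \notin K_{N_\pi + 2}$, one has $f^w_{N_\pi + 2, N_\pi + 2}(\gamma_{N_\pi + 2}) = 1$ and $f^w_{N_\pi + 2, N_\pi + 1}(\gamma_{N_\pi + 2}) = 0$, so the value at $\gamma_{N_\pi + 2}$ reads off the $f^w_{N_\pi + 2, N_\pi + 2}$-coefficient. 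The computation itself uses formula (\ref{eq:theta}): the translation term is handled by the identity $\gamma_i \zeta^{-1} = \zeta^{-1} \gamma_{i-1}$ together with $\delta_B^{1/2}(\zeta^{-1}) = q^2$ and $\mu_1^w(\p^{-1}) = -q$ (since $\mu_1^w|_{F^\times} = \omega_{E/F} |\cdot|_F$), giving $-q^3$ at $\gamma_{N_\pi + 2}$; the sum term contributes $+q$, since for $x = 0$ the translate $\gamma_{N_\pi + 2} u(0, 0) = \gamma_{N_\pi + 2}$ lies in the trivial double coset with value $1$, and for each of the $q - 1$ non-zero representatives $x = c\p^{-N_\pi - 2}$ a direct calculation gives $\gamma_{N_\pi + 2} u(0, x\e) = b\hat{u}$ with the $T$-part of $b$ in $(1 + \mi_E^{N_\pi + 2}) \times E^1_{N_\pi + 2}$ and $\hat{u} \in K_{N_\pi + 1}$, again yielding the value $1$ because $\mu_1^w$ is unramified and $c(\mu_2) = N_\pi$. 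The total $q(1 - q^2) \neq 0$ produces the required non-vanishing, and Lemma~\ref{lem:theta} then gives genericity of $\rho$.

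The main obstacle is the explicit decomposition of $\gamma_{N_\pi + 2} u(0, x\e)$ for non-zero $x$: although the individual matrix entries of this product have sharply negative valuation, the $\hat{U}$-part of its $B\hat{U}$-decomposition falls inside $K_{N_\pi + 1}$ owing to cancellations forced by the unitary relation $g\sigma(g) = 1$. Verifying this cleanly requires a direct matrix manipulation similar in spirit to the one used in the proof of Lemma~\ref{lem:intertwine}.
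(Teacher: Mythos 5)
Your proof is correct and follows essentially the same route as the paper: apply Lemma~\ref{lem:theta}(i) after showing $\theta'$ is injective on $W(N_\rho)=V^w(N_\pi+1)$, with the injectivity verified by the explicit $\theta'$-computation (i.e.\ the argument in the proof of Lemma~\ref{lem:theta_inj} applied to $\pi^w$). The only deviation is your choice to evaluate $\theta' f^w_{N_\pi+1,N_\pi+1}$ at $\gamma_{N_\pi+2}$, which is what creates the ``main obstacle'' you flag at the end. That obstacle is avoidable: $\theta' f\in V^w(N_\pi+2)$ is right $K_{N_\pi+2}$-invariant and $\gamma_{N_\pi+2}\in K_{N_\pi+2}$, so $(\theta' f)(\gamma_{N_\pi+2})=(\theta' f)(1)$. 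Evaluating at $1$ makes the sum term trivial: $u(0,x\sqrt{\epsilon})\in U\subset B$, so $f(u(0,x\sqrt{\epsilon}))=f(1)=1$ for every $x$, with no matrix decomposition needed. (For the record, the decomposition you gesture at does work: $u(0,-x\sqrt{\epsilon})\,\gamma_{N_\pi+2}\,u(0,x\sqrt{\epsilon})$ lands in $K_{N_\pi+1}$ for $x\in\mi_F^{-N_\pi-2}$, so $f(\gamma_{N_\pi+2}u(0,x\sqrt{\epsilon}))=f(1)=1$ as you claimed — but this is exactly the verification that evaluating at $1$ lets you skip.) Your final value $q(1-q^2)\neq 0$ agrees with the paper's $q(q\mu_1^w(\p^{-1})+1)f(1)$.
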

%%%%%%%%%%%%%
\begin{proof}
Theorem~\ref{thm:cond} (ii) says that
the space
$V^w(N_\pi+1)$ is one-dimensional.
By
Lemma~\ref{lem:ru2_1},
we have
$W(N_\rho) = V^w(N_\pi+1)$.
It follows from Theorem~\ref{thm:cond} (ii) 
that
every function in
$V^w(N_\pi+1)$ is 
supported on
$BK_{N_\pi+1}$.
Thus, 
for any non-zero element $f$ in $V^w(N_\pi+1)$,
we have $f(1) \neq 0$.
Applying the argument in the proof of Lemma~\ref{lem:theta_inj},
we get
$\theta' f(1) \neq 0$.
This implies that
$\theta'$ on $W(N_\rho)$
is injective.
So Lemma~\ref{lem:theta} (i) says that
$\rho$ must be generic.
Since $\pi$ has the unique irreducible generic subquotient, 
the remaining representation $\tau$ is not generic.
\end{proof}

Now we get the dimension formula
of oldforms for representations in the
 case (RU2).
%%%%%%%%%%%
\begin{prop}\label{prop:ru2}
Suppose that $\mu_1$ satisfies the condition (RU2).
Then

(i) $N_{\pi_1} = N_\pi+1$ and 
\[
\dim V_1(n) = \left\lfloor \frac{n-N_{\pi_1}}{2}\right\rfloor+1,\
n \geq N_{\pi_1};
\]

(ii) $N_{\pi_2} = N_\pi$
and
\[
\dim V_2(n) = \frac{1+(-1)^{n-N_{\pi_2}}}{2},\
n \geq N_{\pi_2}.
\]
\end{prop}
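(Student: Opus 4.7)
The plan is to combine Lemmas~\ref{lem:ru2_1} and \ref{lem:ru2_2} with Lemma~\ref{lem:theta} and Proposition~\ref{prop:oldform}. By Lemma~\ref{lem:ru2_2}, the unique generic constituent $\pi_1$ is the quotient $\rho = \pi/\tau$, while the non-generic constituent $\pi_2$ is the subrepresentation $\tau$. Thus Lemma~\ref{lem:ru2_1} immediately gives $N_{\pi_1} = N_\rho = N_\pi+1$ and $N_{\pi_2} = N_\tau = N_\pi$, establishing the asserted conductors.

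To prove (i), I would note that the proof of Lemma~\ref{lem:ru2_2} has already shown that $\theta'\colon W(N_\rho) \to W(N_\rho+1)$ is injective: the space $W(N_\rho)$ is one-dimensional (by Lemma~\ref{lem:ru2_1}), every non-zero element is supported on $BK_{N_\pi+1}$, and the same explicit calculation that proves Lemma~\ref{lem:theta_inj} applied to $\mu^w$ (for which $\mu_1^w|_{F^\times} = \omega_{E/F}|\cdot|_F$, hence $\neq \omega_{E/F}|\cdot|_F^{-1}$) shows $\theta' f(1) \neq 0$. Under the identification $\pi_1 = \rho$, this says that $\theta'$ is injective on $V_1(N_{\pi_1})$. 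Lemma~\ref{lem:theta} (ii) then guarantees that every non-zero newform $v$ for $\pi_1$ satisfies $W_v(1) \neq 0$, and Proposition~\ref{prop:oldform} delivers
\[
\dim V_1(n) = \left\lfloor \frac{n-N_{\pi_1}}{2}\right\rfloor + 1, \qquad n \geq N_{\pi_1}.
\]

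For (ii), I would use (\ref{eq:red_decomp}) together with Theorem~\ref{thm:cond} (ii) and part (i). For $n \geq N_{\pi_1} = N_\pi+1$, this gives
\[
\dim V_2(n) = \left(\left\lfloor \tfrac{n-N_\pi}{2}\right\rfloor+1\right) - \left(\left\lfloor \tfrac{n-N_\pi-1}{2}\right\rfloor+1\right),
\]
which equals $1$ when $n-N_\pi$ is even and $0$ when $n-N_\pi$ is odd; this matches $(1+(-1)^{n-N_{\pi_2}})/2$. In the remaining case $n = N_{\pi_2} = N_\pi$, we have $V_1(N_\pi) = 0$ and $\dim V(N_\pi) = 1$, so $\dim V_2(N_\pi) = 1$, consistent with the formula.

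The main obstacle is the injectivity step in (i), but as noted it has essentially been carried out inside the proof of Lemma~\ref{lem:ru2_2}; the proposition then just assembles Lemma~\ref{lem:theta} and Proposition~\ref{prop:oldform} on top of this injectivity, with the rest being an arithmetic identity for floor functions.
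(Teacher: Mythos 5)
Your proof is correct and follows essentially the same route as the paper: identify $\pi_1 = \rho$ and $\pi_2 = \tau$ via Lemma~\ref{lem:ru2_2}, read off the conductors from Lemma~\ref{lem:ru2_1}, extract injectivity of $\theta'$ on $V_1(N_{\pi_1})$ from the proof of Lemma~\ref{lem:ru2_2} and feed it through Lemma~\ref{lem:theta}~(ii) and Proposition~\ref{prop:oldform} for part~(i), then subtract dimensions via (\ref{eq:red_decomp}) and Theorem~\ref{thm:cond}~(ii) for part~(ii). The only stylistic gap is that the paper first makes explicit the harmless normalization ``we may assume $\mu_1|_{F^\times} = \omega_{E/F}|\cdot|_F^{-1}$ since $\pi$ and $\pi^w$ have the same irreducible subquotients,'' which you use implicitly by invoking Lemmas~\ref{lem:ru2_1} and~\ref{lem:ru2_2} under their standing hypotheses.
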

%%%%%%%%%%%%
\begin{proof}
We may assume that 
$\mu_1$ is the unramified quasi-character of $E^\times$
such that
${\mu}_1|_{F^\times} = \omega_{E/F} |\cdot|_F^{-1}$
since $\pi$ and $\pi^w$ have
the same irreducible subquotients.
Then Lemma~\ref{lem:ru2_2}
implies that
$(\pi_1, V_1) = (\rho, W)$
and $(\pi_2, V_2) = (\tau, U)$.
Due to Lemma~\ref{lem:ru2_1},
we have
$N_{\pi_1} = N_\pi+1$ and $N_{\pi_2} = N_\pi$.

As seen
in the proof of  Lemma~\ref{lem:ru2_2},
the operator
$\theta'$ is injective on $V_1(N_{\pi_1})$.
So by Lemma~\ref{lem:theta} (ii)
and Proposition~\ref{prop:oldform},
we obtain
$\dim V_1(n) = \left\lfloor \frac{n-N_{\pi_1}}{2}\right\rfloor+1$,
for
$n \geq N_{\pi_1}$.
The dimension formula of $V_2(n)$
follows from Theorem~\ref{thm:cond} (ii) and (\ref{eq:red_decomp}).
\end{proof}

Thirdly,
we consider the case (RU3).
Suppose that 
$\mu_1$ is trivial and 
and $\mu_2$ is not trivial.
Then we have $N_{\pi} = c(\mu_2) \geq 1$
by Theorem~\ref{thm:cond} (i).
In this case,
both $\pi_1$ and $\pi_2$ are subrepresentation of $\pi$
since $\mu$ is stable under the action of the long element of 
the affine Weyl group of $G$.
%%%%%%%%%%%%%
\begin{prop}\label{prop:ru3}
Suppose that $\mu_1$ and $\mu_2$ satisfy the condition (RU3).
Then

(i)
$N_{\pi_1} = N_\pi$ and 
\[
\dim V_1(n) = \left\lfloor \frac{n-N_{\pi_1}}{2}\right\rfloor +
1,\  n \geq N_{\pi_1};
\]

(ii)
$\dim V_2(n) = 0$ for all $n \geq 0$.
\end{prop}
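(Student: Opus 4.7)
The plan is to exploit the injectivity of the level-raising operator $\theta'$ on the newform space, together with a direct-sum decomposition of $\pi$ that is special to case (RU3).

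First I would observe that, because $\mu$ is fixed by the long element of the affine Weyl group, both irreducible constituents appear as subrepresentations of $\pi$; since $\pi$ has composition length two and $\pi_1\not\cong\pi_2$, their zero intersection forces $V=V_1\oplus V_2$, and hence $V(n)=V_1(n)\oplus V_2(n)$ for every $n\geq 0$, with $\theta'$ respecting the splitting. By Theorem~\ref{thm:cond}(i) and Proposition~\ref{prop:cent} we have $N_\pi=c(\mu_2)\geq 1$ and $\dim V(N_\pi)=1$, while Lemma~\ref{lem:theta_inj} gives injectivity of $\theta'\colon V(N_\pi)\to V(N_\pi+1)$, since the triviality of $\mu_1$ precludes the excluded condition $\mu_1|_{F^\times}=\omega_{E/F}|\cdot|_F^{-1}$.

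The crux of the argument is then to decide which summand contains the (one-dimensional) newform space of $\pi$. Because $V(N_\pi)=V_1(N_\pi)\oplus V_2(N_\pi)$ is one-dimensional, exactly one of the two summands is non-zero. If it were $V_2(N_\pi)$, then the injectivity of $\theta'$ on $V(N_\pi)$ would transfer to $V_2(N_\pi)$, and Lemma~\ref{lem:theta}(i) (applicable since $N_{\pi_2}=N_\pi\geq 1$) would force $\pi_2$ to be generic — contradicting the fact that $\pi_1$ is the unique generic subquotient of $\pi$. Hence $V_1(N_\pi)=V(N_\pi)$ and $V_2(N_\pi)=0$; in particular $N_{\pi_1}=N_\pi$.

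To conclude, I would apply Lemma~\ref{lem:theta}(ii) to $\pi_1$: since $\theta'$ is injective on $V_1(N_{\pi_1})=V(N_\pi)$ and $N_{\pi_1}\geq 1$, every non-zero newform $v\in V_1(N_{\pi_1})$ satisfies $W_v(1)\neq 0$. Proposition~\ref{prop:oldform} then gives the dimension formula $\dim V_1(n)=\lfloor (n-N_{\pi_1})/2\rfloor+1$ for $n\geq N_{\pi_1}$, which coincides with $\dim V(n)$ by Theorem~\ref{thm:cond}(ii); subtracting via (\ref{eq:red_decomp}) yields $\dim V_2(n)=0$ for all $n\geq N_\pi$, and for $n<N_\pi$ the vanishing is immediate from $V(n)=0$. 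The main obstacle is the contrapositive argument in the middle step: ruling out the scenario in which the unique newform of $\pi$ lives in the non-generic summand, where the combination of injectivity of $\theta'$ and the non-genericity of $\pi_2$ is precisely what is needed.
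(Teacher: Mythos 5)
Your proposal is correct and follows essentially the same route as the paper: it identifies the constituent carrying the newform by combining injectivity of $\theta'$ (Lemma~\ref{lem:theta_inj}) with Lemma~\ref{lem:theta}(i), then derives the dimension formula from Lemma~\ref{lem:theta}(ii) and Proposition~\ref{prop:oldform}, and finally subtracts via Theorem~\ref{thm:cond}(ii) and~(\ref{eq:red_decomp}). The only cosmetic difference is that you upgrade the exact sequence~(\ref{eq:red_decomp}) to a genuine direct sum $V=V_1\oplus V_2$ (valid here since both constituents are subrepresentations and nonisomorphic), which is harmless and slightly more explicit than the paper's bookkeeping.
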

%%%%%%%%%%%%%
\begin{proof}
There exists unique $i \in \{1,2\}$ such that $N_{\pi_i} = N_\pi$
since $V(N_\pi)$ is one-dimensional.
By Lemma~\ref{lem:theta_inj},
the operator $\theta'$  is injective on $V_i(N_{\pi_i}) = V(N_\pi)$.
Thus
Lemma~\ref{lem:theta} (i) implies that
$\pi_i$ must be generic.
Hence we get $i = 1$.
Due to Lemma~\ref{lem:theta} (ii) and Proposition~\ref{prop:oldform},
we obtain
$\dim V_1(n) = \left\lfloor \frac{n-N_{\pi_1}}{2}\right\rfloor +
1$.
Now Theorem~\ref{thm:cond} (ii)
implies that $V_1(n) = V(n)$ for all $n \geq 0$,
and hence we get $V_2(n) = \{0\}$ for all $n \geq 0$
by (\ref{eq:red_decomp}).
\end{proof}

%%%%%%%%%%%%%%%%%%%%%%%%%%%%%%%%%%
\Section{Explicit newforms}\label{sec:enew}
%%%%%%%%%%%%%%%%%%%%%
In this section,
we determine newforms for generic non-supercuspidal
representations explicitly.
Every irreducible 
non-supercuspidal representation $\pi$
of $G$ can be embedded into
$\mathrm{Ind}_B^G \mu$,
for some quasi-character $\mu$ of $T$.
We shall realize the newforms for $\pi$
as functions 
in $\mathrm{Ind}_B^G \mu$.
This problem is easy 
unless $\pi$ is the Steinberg representation of $G$:
%%%%%%%%%%
\begin{prop}\label{prop:enew}
Let $(\pi, V)$ be an irreducible generic non-supercuspidal
representation of $G$,
which is not isomorphic to the Steinberg representation of $G$.
Let $\mu_1$ be a quasi-character of $E^\times$
and $\mu_2$ a character of $E^1$
such that $\pi$ is a subrepresentation of 
$\mathrm{Ind}_B^G \mu_1\otimes \mu_2$.
Then the function
$f_{N_\pi, N_\pi-c(\mu_1)}$ is a newform for $\pi$.
Here $f_{N_\pi, N_\pi-c(\mu_1)}$
is the function in Definition~\ref{defn:f}.
\end{prop}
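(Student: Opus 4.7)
The plan is to case-split on whether $\pi_0 := \mathrm{Ind}_B^G \mu_1 \otimes \mu_2$ is irreducible or reducible, and in the reducible case to invoke the Keys classification recalled in Section~\ref{sec:red}. In every situation I will verify that $V_\pi(N_\pi)$ sits inside the ambient space $V$ of $\pi_0$ as a subspace of dimension one, and that $V(N_\pi)$ itself is one-dimensional and spanned by the proposed function $f_{N_\pi, N_\pi - c(\mu_1)}$; this forces the two to coincide and proves the claim.

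If $\pi_0$ is irreducible, then $\pi = \pi_0$ and $V_\pi(N_\pi) = V(N_\pi)$. By Theorem~\ref{thm:cond}(i) we have $N_\pi = 2c(\mu_1) + c(\mu_2)$, and substituting $n = N_\pi$ in Theorem~\ref{thm:cond}(ii) collapses the range $\tfrac{n+c(\mu_2)}{2} \leq i \leq n - c(\mu_1)$ to the single value $i = N_\pi - c(\mu_1)$. Hence $V(N_\pi)$ has basis $\{f_{N_\pi, N_\pi - c(\mu_1)}\}$, settling this case.

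Now assume $\pi_0$ is reducible. Since $\pi$ is generic and not Steinberg, condition (RU1) is ruled out and $\pi$ coincides with the unique generic constituent $\pi_1$; in particular $V_\pi(N_\pi)$ is one-dimensional by Theorem~\ref{thm:new}(ii). It therefore suffices to show that $V(N_\pi)$ is also one-dimensional with basis $f_{N_\pi, N_\pi - c(\mu_1)}$, after which the inclusion $V_\pi(N_\pi) \subset V(N_\pi)$ is automatically an equality. In the ``$\mu_1$ ramified'' case and in (RU3), Propositions~\ref{prop:ram} and~\ref{prop:ru3} give $N_\pi = N_{\pi_0} = 2c(\mu_1) + c(\mu_2)$, so the dimension calculation above applies verbatim. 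In case (RU2), Lemma~\ref{lem:ru2_1} forces $\mu_1$ to be the conjugate character for which $\pi$ is embedded as a subrepresentation rather than realized as a quotient; then $c(\mu_1) = 0$ and $N_\pi = c(\mu_2)+1$, and a direct check of the two inequalities in Definition~\ref{defn:f} shows that the only admissible index is $i = N_\pi$, again yielding $V(N_\pi) = \C \cdot f_{N_\pi, N_\pi} = \C \cdot f_{N_\pi, N_\pi - c(\mu_1)}$.

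The main obstacle is case (RU2), where $\pi$ is a quotient of one of the two conjugate principal series and appears as a subrepresentation only of its partner $\mathrm{Ind}_B^G \mu^w$. Identifying which of the two quasi-characters the hypothesis of the proposition refers to, and invoking the correct dimension statement from Lemma~\ref{lem:ru2_1}, is the sole non-routine point; the remaining cases reduce to a direct application of Theorem~\ref{thm:cond} together with the structural results of Section~\ref{sec:red}.
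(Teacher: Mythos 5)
Your proof is correct and follows essentially the same route as the paper: show $N_\pi$ equals $2c(\mu_1)+c(\mu_2)$ or $2c(\mu_1)+c(\mu_2)+1$, use Theorem~\ref{thm:cond}(ii) to see that the $K_{N_\pi}$-fixed subspace of the ambient induced representation is one-dimensional and spanned by $f_{N_\pi,\,N_\pi-c(\mu_1)}$, then observe that $V(N_\pi)$ is nonzero and contained in it, hence equal. The only cosmetic difference is that you break out case (RU2) and compute the admissible index by hand, whereas the paper handles both values of $N_\pi$ uniformly; also the identification of which conjugate character gives the subrepresentation in (RU2) really comes from Lemma~\ref{lem:ru2_2} (which shows $\rho$, not $\tau$, is generic), rather than Lemma~\ref{lem:ru2_1} alone, though in context Propositions~\ref{prop:ru2} packages this for you.
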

%%%%%%%%%%
\begin{proof}
By Theorem~\ref{thm:cond} (i),
$\mathrm{Ind}_B^G \mu_1\otimes \mu_2$
has conductor $2c(\mu_1) + c(\mu_2)$.
If $\pi = \mathrm{Ind}_B^G \mu_1\otimes \mu_2$,
then the assertion follows from Theorem~\ref{thm:cond} (ii).
Suppose that $\pi$ is a proper submodule of 
$\mathrm{Ind}_B^G \mu_1\otimes \mu_2$.
Since
$\pi$ is not the representation in the case (RU1),
it follows from Propositions~\ref{prop:ram},
\ref{prop:ru2} and \ref{prop:ru3}
that
$N_\pi$ equals to 
$2c(\mu_1) + c(\mu_2)$ or $2c(\mu_1) + c(\mu_2)+1$.
Theorem~\ref{thm:cond} (ii)
says that
the space of $K_{N_\pi}$-fixed vectors in 
$\mathrm{Ind}_B^G \mu_1\otimes \mu_2$
is one-dimensional.
So $V(N_\pi)$ is just the space of 
$K_{N_\pi}$-fixed functions in 
$\mathrm{Ind}_B^G \mu_1\otimes \mu_2$.
Thus the proposition follows from
Theorem~\ref{thm:cond} (ii).
\end{proof}

We shall
determine newforms for the Steinberg representation
$\mathrm{St}_G$ of $G$.
Let $\pi_1$ be the Steinberg representation of $G$.
Then
$\pi_1$
is the unique subrepresentation
of $\pi = \mathrm{Ind}_B^G (\mu_1 \otimes \mu_2)$,
where 
$\mu_1 = |\cdot|_E$ and $\mu_2 = 1$.
We have
$N_{\pi_1} = 2$, $N_\pi = 0$ and $n_{\pi_1} = n_\pi = 0$ by Propositions~\ref{prop:cent}, \ref{prop:St} (i)
and Theorem~\ref{thm:cond} (i).
We write $V$ and $V_1$ for the spaces of 
$\pi$ and $\pi_1$ respectively.
Due to Theorem~\ref{thm:cond} (ii),
$V(2)$ is the two-dimensional
subspace of $V$
spanned by $f_{2,1}$ and $f_{2,2}$.
The space $V_1(2)$ of newforms for $\pi_1$
is a one-dimensional subspace of $V(2)$.
%%%%%%%%%%%%%%%%%%%%
\begin{prop}\label{prop:St2}
With the notation as above,
a function $f$ in $V(2)$ lies in 
$V_1(2)$
if and only if
$f(1) = -q(q-1)f(\gamma_1)$.
In particular,
$q(q-1)f_{2,2}-f_{2,1}$
is a newform for the Steinberg representation of $G$.
\end{prop}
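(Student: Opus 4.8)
The plan is to identify $V_1(2)$ as the kernel of a level-lowering map on oldforms for $\pi = \mathrm{Ind}_B^G(|\cdot|_E \otimes 1)$. The quotient $\rho = \pi/\pi_1$ is the trivial representation of $G$ (this is the case (RU1) with $\pi_2$ trivial, by Proposition~\ref{prop:St}). Since $\dim V(n) - \dim V_1(n) = \dim V_2(n) = 1$ for all $n$, the one-dimensional space $V_1(2)$ is precisely the kernel of the surjection $V(2) \twoheadrightarrow W(2)$, where $W = V/U$ carries $\rho$. So I first want a concrete linear functional on $V(2)$ whose kernel is $V_1(2)$; the natural candidate is the composite $V(2) \to W(2) \xrightarrow{\sim} \C$, but to make it explicit I would instead use the level-lowering operator adjoint to $\theta'$ (or $\eta$), or simply evaluate functions at suitable points and exploit that on the trivial quotient every $K_n$-fixed function has a fixed, computable pushforward value.

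The cleanest concrete route: a function $f \in V(2)$ lies in $V_1(2)$ iff its image in $W(2) = V^w(2)/\!\sim$ vanishes, and since $\rho$ is the \emph{trivial} representation, the image is determined by a single pairing. I would make this pairing explicit by integrating against a $K_0$-fixed vector in the contragredient, i.e. computing $\int_{B\backslash G} f \cdot \phi$ for $\phi$ the spherical vector of the contragredient of $\pi$. Because $f = a f_{2,1} + b f_{2,2}$ is supported on $B\gamma_1 K_2 \sqcup B\gamma_2 K_2$ with $\gamma_2 = 1$, this integral is a linear combination $a\,c_1 + b\,c_0$ of the volumes (with the $\delta_B^{1/2}\mu_1$-twist) of $B\gamma_1 K_2$ and $BK_2$ in $B\backslash G$. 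These two volumes are elementary index computations: the measure of $B\backslash B\gamma_i K_2$ relative to $B\backslash B K_2$ is $|\p^{\,?}|_E$-weighted by the number of $K_2$-cosets, and one finds the ratio is governed by $q(q-1)$ coming from $\mathrm{Card}(BK_2\gamma_1/K_2)$ together with the $\delta_B^{1/2}\mu_1(\mathrm{diag}) = |\cdot|_E^{2}$ twist evaluated on the relevant torus element. Matching the stated answer $f(1) = -q(q-1) f(\gamma_1)$ amounts to checking the sign and the power of $q$ in this ratio.

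Alternatively — and this is probably the slicker proof to write — I would use the operator $\theta'$ directly. Since $N_{\pi_1} = 2 = N_\pi + 2$ and $\theta'\colon V(0) \to V(1)$ is bijective while $V_1(0) = V_1(1) = 0$, the composite $\theta'^{\,2}\colon V(0) \to V(2)$ lands in the trivial-quotient part, i.e. its image is a complement to $V_1(2)$; equivalently $V_1(2) = \ker(\lambda)$ where $\lambda\colon V(2) \to \C$ is the functional ``coefficient along $\theta'^{\,2} f_{0,0}$.'' Using formula~(\ref{eq:theta}) twice, I would expand $\theta'^{\,2} f_{0,0}$ in the basis $\{f_{2,1}, f_{2,2}\}$ by evaluating at $1$ and at $\gamma_1$: evaluation at $1$ reuses the computation in the proof of Lemma~\ref{lem:theta_inj} (giving a factor like $q(q\mu_1(\p^{-1}) + 1) = q(q\cdot q + 1)$ with $\mu_1 = |\cdot|_E$, hence $\mu_1(\p^{-1}) = q$), and evaluation at $\gamma_1$ requires tracking which of the translates $u(0,x\e)\zeta^{-1}$ land in $B\gamma_1 K_2$ versus $BK_2$ — a finite coset bookkeeping over $x \in \mi_F^{-1}/\mi_F^{0}$ and $\mi_F^{-2}/\mi_F^{-1}$.

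The main obstacle will be the second evaluation: computing $\theta'^{\,2} f_{0,0}(\gamma_1)$, or equivalently the volume ratio $[B\gamma_1 K_2 : B K_2]$, without error. This is where one must carefully use the coset representatives from Lemma~\ref{lem:coset}, the description $\hat u(y, -y\bar y/2)$ of the nontrivial double cosets, and the normalization of Haar measure, and where the factor $q(q-1)$ (as opposed to $q^2$, $q-1$, or $q^2-1$) and the minus sign are pinned down. Once that single ratio is in hand, the equivalence ``$f \in V_1(2) \iff f(1) = -q(q-1) f(\gamma_1)$'' is immediate, and specializing to $f = q(q-1) f_{2,2} - f_{2,1}$ (so $f(1) = q(q-1)$, $f(\gamma_1) = -1$) exhibits the asserted newform.
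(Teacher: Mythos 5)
Your preferred route via $\theta'$ has a genuine gap. Knowing that $\theta'^{\,2} f_{0,0}$ has nonzero image in the trivial quotient --- equivalently, that it spans a complement to $V_1(2)$ inside the two-dimensional space $V(2)$ --- tells you nothing about \emph{which} line $V_1(2)$ actually is. The functional ``coefficient along $\theta'^{\,2} f_{0,0}$'' is only defined after fixing a complementary subspace, and that complement is precisely the unknown $V_1(2)$, so the argument is circular; computing $\theta'^{\,2} f_{0,0}$ in the basis $\{f_{2,1},f_{2,2}\}$ by itself cannot produce the relation $f(1)=-q(q-1)f(\gamma_1)$. To repair this you would also need the expansion of a second vector whose image in $V_2(2)$ you can compute independently, e.g.\ $\eta f_{0,0}$ (on the trivial representation $V_2$ the operators $\eta$ and $\theta'$ act by $1$ and $q+1$, respectively); only then is the quotient map $V(2)\to V_2(2)\simeq\C$, and hence its kernel $V_1(2)$, pinned down.

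The paper instead constructs a level-lowering operator $\delta:V(2)\to V(1)$ by averaging over $K_1$ (normalized using that $Z_1K_2$ acts trivially on $V(2)$). Since $V_1(1)=\{0\}$ forces $V_1(2)\subseteq\ker\delta$, and an explicit formula for $\delta$ from \cite{M2} Lemma 4.9 gives $\delta f(1)=(q^{-1}+1)f(1)+(q^2-1)f(\gamma_1)$, while $G=BK_1$ makes $\delta f=0$ equivalent to $\delta f(1)=0$, one concludes $\ker\delta$ is one-dimensional and equals $V_1(2)$. You mention a level-lowering operator in passing but never develop it. Your first alternative --- pairing against the $K_0$-spherical vector $\phi$ of the contragredient --- is conceptually sound: $\tilde V_2$ is the annihilator of $V_1$ inside $\tilde V$, and nondegeneracy of the pairing $V(2)\times\tilde V(2)\to\C$ gives $V_1(2)=\ker\langle\,\cdot\,,\phi\rangle$; carried through, the volume computation you outline would give a genuinely different but valid derivation of the same one-dimensional condition. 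As written, though, the proposal's chosen route does not close.
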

%%%%%%%%%%%%%%%%%%%%%%%%%%
\begin{proof}
Since $\pi$ has trivial central character,
the group $Z_1K_2$ acts on $V(2)$
trivially.
We define a level lowering operator $\delta: V(2) \rightarrow V(1)$ by
\[
\delta v = 
\frac{1}{\mathrm{vol}(K_{1}\cap (Z_{1}K_2))}
\int_{K_{1}}\pi(k)v dv,\ v \in V(2).
\]
The space $V_1(2)$ is contained in $\ker \delta$
since $V_1(1) = \{0\}$.
We shall show that 
a function $f$ in $V(2)$ lies in 
the kernel of $\delta$
if and only if
$f(1) = -q(q-1)f(\gamma_1)$.
Then $\ker \delta$ is of dimension one,
and hence coincides with $V_1(2)$.

It follows from \cite{M2} Lemma 4.9 that
$\delta$ has the following form:
\[
\delta v = \sum_{\substack{y \in \mi_E/\mi_E^{2}\\z 
\in \mi_F/\mi_F^{2}}}
\pi(\hat{u}(y, z\e-y\overline{y}/2)) v+ 
 \sum_{y \in \mi_E^{-1}/\ri_E}
\pi( \zeta {u}(y, -y\overline{y}/2)) v,\ v \in V(2).
\]
For any element $f$ in $V(2)$,
we have
\begin{eqnarray*}
\delta f(1) & = & 
 \sum_{\substack{y \in \mi_E/\mi_E^{2}\\z 
\in \mi_F/\mi_F^{2}}}
\pi(\hat{u}(y, z\e-y\overline{y}/2)) f(1)+
 \sum_{y \in \mi_E^{-1}/\ri_E}
\pi( \zeta {u}(y, -y\overline{y}/2)) f(1)\\
& = & 
 \sum_{\substack{y \in \mi_E/\mi_E^{2}\\z 
\in \mi_F/\mi_F^{2}}}
f(\hat{u}(y, z\e-y\overline{y}/2)) +
 \sum_{y \in \mi_E^{-1}/\ri_E}
f( \zeta {u}(y, -y\overline{y}/2))\\
& = & 
  \sum_{\substack{y \in \mi_E/\mi_E^{2}\\z 
\in \mi_F/\mi_F^{2}}}
f(\hat{u}(y, z\e-y\overline{y}/2)) +
q^{-2}\mu_1(\p) \sum_{y \in \mi_E^{-1}/\ri_E}
f({u}(y, -y\overline{y}/2))\\
& = & 
 \sum_{\substack{y \in \mi_E/\mi_E^{2}\\z 
\in \mi_F/\mi_F^{2}}}
f(\hat{u}(y, z\e-y\overline{y}/2)) +
\mu_1(\p) 
f(1).
\end{eqnarray*}
We shall compute
$f(\hat{u}(y, z\e-y\overline{y}/2)) $, for $y \in \mi_E/\mi_E^{2}$ and $z \in \mi_F^{}/\mi_F^{2}$.

(i)
Suppose that 
$y \in \mi_E^{2}$ and $z \in \mi_E^{2}$.
Then we have
$f(\hat{u}(y, z\e-y\overline{y}/2)) =f(1)$
since $\hat{u}(y, z\e-y\overline{y}/2)$ lies in $K_2$.

(ii) If 
$y \not\in \mi_E^{2}$ and $z \in \mi_E^{2}$,
then $\hat{u}(y, z\e-y\overline{y}/2)=
\hat{u}(y, -y\overline{y}/2)\hat{u}(0, z\e) \equiv \hat{u}(y, -y\overline{y}/2) \pmod{K_{2}}$.
There exists $a \in \ri_E^\times$ such that
$t(a)\hat{u}(y, -y\overline{y}/2)t(a)^{-1}
= \hat{u}(\p, -\p^2/2) = \gamma_1$.
So we have
$f(\hat{u}(y, z\e-y\overline{y}/2)) =f(\gamma_1)$.

(iii) Suppose that 
$z \not\in \mi_E^{2}$.
Then $x = z\e-y\overline{y}/2$ 
lies in $\mi_E\backslash \mi_E^2$.
We have
\[
\hat{u}(y, z\e-y\overline{y}/2) =
 u(-\overline{y}/\overline{x}, 1/x) \mathrm{diag}(\p^{2}/\overline{x}, -\overline{x}/x, \p^{-2}x)t_{2}u(-\overline{y}/x, 1/\overline{x}).
\]
Since $t_{2}u(-\overline{y}/x, 1/\overline{x})\in K_{2}$
and $\p^{2}/\overline{x} \in \p \ri_E^\times$,
we obtain
\[
f(\hat{u}(y, z\e-y\overline{y}/2)) = q^{-2}\mu_1(\p)\mu_2(-\overline{x}/x)f(1)
=
q^{-2}\mu_1(\p)f(1).
\]

Therefore we get 
\begin{eqnarray*}
\delta f(1) & = & 
 \sum_{\substack{y \in \mi_E/\mi_E^{2}\\z \in \mi_F/\mi_F^{2}}}
f(\hat{u}(y, z\e-y\overline{y}/2)) +
\mu_1(\p) 
f(1)\\
& = & 
f(1)+(q^2-1)f(\gamma_1)+q^2(q-1)q^{-2}\mu_1(\p)f(1)
+\mu_1(\p) 
f(1)\\
& = & 
(q\mu_1(\p)+1)f(1)+(q^2-1)f(\gamma_1)\\
& = & 
(q^{-1}+1)f(1)+(q^2-1)f(\gamma_1).
\end{eqnarray*}
By Lemma~\ref{lem:coset},
we have $G = BK_1$.
Therefore
$\delta f \in V(1)$ is zero 
if and 
only if $\delta f(1) = 0$.
So we conclude that $\delta f = 0$
if and only if
$f(1) = -q(q-1)f(\gamma_1)$.
This completes the proof.
\end{proof}

%%%%%%%%%
\begin{cor}\label{cor:1}
Suppose that  an irreducible generic 
representation $(\pi, V)$ of $G$ is a subrepresentation of 
$\mathrm{Ind}_B^G \mu_1 \otimes \mu_2$,
where $\mu_1$ is an unramified quasi-character of $E^\times$.
We regard elements in $V$
as functions in $\mathrm{Ind}_B^G \mu_1 \otimes \mu_2$.
Then
any non-zero element in $f \in V(N_\pi)$
satisfies $f(1) \neq 0$.
\end{cor}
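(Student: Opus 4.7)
The plan is to invoke the explicit newforms described in Propositions~\ref{prop:enew} and \ref{prop:St2} and to evaluate them at the identity. Since Theorem~\ref{thm:new}(ii) guarantees that $V(N_\pi)$ is one-dimensional, it is enough to exhibit a single newform whose value at $1$ is non-zero; every other element of $V(N_\pi)$ is a scalar multiple of it.

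First I would treat the case $\pi\not\simeq\mathrm{St}_G$. Since $\mu_1$ is unramified we have $c(\mu_1)=0$, and Proposition~\ref{prop:enew} furnishes the newform $f_{N_\pi,N_\pi}$. The element $\gamma_{N_\pi}$ lies in $K_{N_\pi}$, because its only non-diagonal entries $\p^{N_\pi}$ and $-\p^{2N_\pi}/2$ belong to $\mi_E^{N_\pi}$. Hence $1\in B\gamma_{N_\pi}K_{N_\pi}$ with trivial Borel part, and the normalization in Definition~\ref{defn:f} forces $f_{N_\pi,N_\pi}(1)=\delta_B^{1/2}(1)\mu(1)=1$.

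Next I would handle the Steinberg case. Here $N_\pi=2$ by Proposition~\ref{prop:St}(i), and Proposition~\ref{prop:St2} identifies a newform, up to scalar, as $f=q(q-1)f_{2,2}-f_{2,1}$. The argument in the previous paragraph still applies to $f_{2,2}$ and yields $f_{2,2}(1)=1$. For $f_{2,1}$, the support is the double coset $B\gamma_1 K_2$, which by Lemma~\ref{lem:coset} is disjoint from $B\gamma_2 K_2$; since $1\in B\gamma_2 K_2$, we conclude $f_{2,1}(1)=0$. Hence $f(1)=q(q-1)\neq 0$, and this non-zero scalar propagates to every non-zero element of $V(N_\pi)$.

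No real obstacle arises: the only point requiring care is the Steinberg case, where the newform is a genuine linear combination of $f_{2,2}$ and $f_{2,1}$, and one must observe that the $f_{2,1}$-piece is killed at $1$ by the disjointness of double cosets from Lemma~\ref{lem:coset}, leaving the $f_{2,2}$-contribution intact.
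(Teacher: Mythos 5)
Your proposal is correct and follows essentially the same route as the paper: reduce to the explicit newforms of Proposition~\ref{prop:enew} (non-Steinberg) and Proposition~\ref{prop:St2} (Steinberg), then evaluate at $1$. The only added detail you supply is the explicit verification that $\gamma_{N_\pi}\in K_{N_\pi}$ and the disjointness of double cosets from Lemma~\ref{lem:coset}; both are implicit in the paper's remark that $\supp f_{N_\pi,N_\pi}=BK_{N_\pi}$.
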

%%%%%
\begin{proof}
If $\pi$ is not isomorphic to the Steinberg representation
of $G$,
then it follows from Proposition~\ref{prop:enew}
 that a non-zero element in $f \in V(N_\pi)$
 is a non-zero multiple of $f_{N_\pi, N_\pi}$.
The assertion follows because 
the support of $f_{N_\pi, N_\pi}$ is $BK_{N_\pi}$.
Suppose that $\pi$ is the Steinberg representation
of $G$.
Then we have $\mu_1 = |\cdot|_E$ and $\mu_2 = 1$.
By Proposition~\ref{prop:St2},
a non-zero element in $f \in V(N_\pi)$
 is a non-zero multiple of $q(q-1)f_{2,2}-f_{2,1}$.
 Since $(q(q-1)f_{2,2}-f_{2,1})(1) = q(q-1)$,
the proof is complete.
\end{proof}

%%%%%%%%%%%%%%%%%%%%%%%%%%%%%%%%%%
\Section{Test vectors for the Whittaker functional}\label{sec:old}
We close this paper by showing that
newforms for generic representations of $G$
are test vectors for the Whittaker functional.
%%%%%%%%%%%
\begin{prop}\label{prop:test}
Let $(\pi, V)$ be an irreducible generic representation of $G$.
Then we have $W_v(1) \neq 0$ for all
non-zero $v \in V(N_\pi)$.
\end{prop}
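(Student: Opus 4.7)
The plan is to combine the preparatory lemmas developed in the earlier sections. The generic supercuspidal case was already settled in \cite{M} and is acknowledged in the introduction, so I restrict attention to irreducible generic non-supercuspidal $\pi$, which then embeds as a subrepresentation of some $\mathrm{Ind}_B^G(\mu_1\otimes\mu_2)$. My strategy is to deduce $W_v(1)\neq 0$ from Lemma~\ref{lem:theta}(ii) by verifying that $\theta':V(N_\pi)\to V(N_\pi+1)$ is injective; the low-conductor case $N_\pi=0$, in which Lemma~\ref{lem:theta} does not apply, will be disposed of separately: there $\pi$ is an unramified principal series $\mathrm{Ind}_B^G(\mu_1\otimes 1)$, and the Casselman--Shalika formula \cite{CS} Theorem 5.4---already invoked in the proof of Lemma~\ref{lem:coset}---gives $W_v(1)\neq 0$ for every non-zero $v\in V(0)$.

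For $N_\pi\geq 1$ I establish injectivity of $\theta'$ by case analysis. When $\mu_1$ is ramified, Theorem~\ref{thm:cond}(i) gives $N_\pi=2c(\mu_1)+c(\mu_2)\geq 2$, and combining this with the bound $n_\pi\leq\max\{c(\mu_1),c(\mu_2)\}$ (from the proof of Proposition~\ref{prop:cent}) yields $N_\pi>n_\pi$; then \cite{M} Theorem 4.12 supplies the test-vector property, and injectivity of $\theta'$ follows from Proposition~\ref{prop:oldform}. When $\mu_1$ is unramified and $\mathrm{Ind}_B^G(\mu_1\otimes\mu_2)$ is itself irreducible, none of the reducibility conditions (RU1)--(RU3) holds, so in particular $\mu_1|_{F^\times}\neq\omega_{E/F}|\cdot|_F^{-1}$ and Lemma~\ref{lem:theta_inj} gives injectivity directly. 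Finally, if $\pi$ is the generic constituent $\pi_1$ in a reducible setting of Section~\ref{sec:red}, then either $\pi_1=\mathrm{St}_G$ (case (RU1)), whose conductors $N_{\pi_1}=2$ and $n_{\pi_1}=0$ again satisfy the hypothesis of \cite{M} Theorem 4.12, or $\pi_1$ is the generic subquotient arising in (RU2) or (RU3), in which case the injectivity of $\theta'$ on $V_1(N_{\pi_1})$ is precisely what was established inside the proofs of Propositions~\ref{prop:ru2} and \ref{prop:ru3}.

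In every subcase the injectivity of $\theta'$ on $V(N_\pi)$ holds, so Lemma~\ref{lem:theta}(ii) completes the proof. The main difficulty is purely organizational: one must verify that the cases $N_\pi\leq 1$ or $N_\pi=n_\pi$, which lie outside the range of the earlier criterion \cite{M} Theorem 4.12, are exactly the ones treated by the explicit information developed in Sections~\ref{sec:red} and \ref{sec:enew}, and that no irreducible generic non-supercuspidal representation falls outside the dichotomy above.
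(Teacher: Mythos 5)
Your proposal follows the same overall outline as the paper's proof: dispose of supercuspidals (via \cite{M}), handle ramified $\mu_1$ via \cite{M} Theorem 4.12, treat $N_\pi=0$ via Casselman--Shalika, and reduce $N_\pi\geq 1$ with $\mu_1$ unramified to injectivity of $\theta'$ through Lemma~\ref{lem:theta}. The one substantive omission is in the $N_\pi=0$ case: you assert without justification that $\pi$ \emph{is} an irreducible unramified principal series $\mathrm{Ind}_B^G(\mu_1\otimes 1)$. It is true that $N_\tau\leq N_\pi=0$ together with Theorem~\ref{thm:cond}(i) forces $c(\mu_1)=c(\mu_2)=0$, but you still need $\tau=\mathrm{Ind}_B^G(\mu_1\otimes\mu_2)$ to be irreducible, i.e.\ $\pi=\tau$, before \cite{CS} Theorem 5.4 applies. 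The paper proves this by excluding each of (RU1)--(RU3): under (RU1) or (RU2) the generic constituent has positive conductor by Propositions~\ref{prop:St} and \ref{prop:ru2}, and (RU3) forces $c(\mu_2)>0$, hence $N_\pi\geq N_\tau>0$. Without this step, it is a priori conceivable that $\pi$ is a proper generic subrepresentation of a reducible $\tau$ with a $K_0$-fixed vector.

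In the $N_\pi\geq 1$, $\mu_1$ unramified regime, your organization differs slightly from the paper's. The paper proceeds uniformly: Corollary~\ref{cor:1} (resting on the explicit newforms of Propositions~\ref{prop:enew} and \ref{prop:St2}) gives $f(1)\neq 0$ for every non-zero $f\in V(N_\pi)$, Lemma~\ref{lem:ru2_2} excludes $\mu_1|_{F^\times}=\omega_{E/F}|\cdot|_F^{-1}$, and the computation from the proof of Lemma~\ref{lem:theta_inj} yields $\theta'f(1)\neq 0$. You instead split: for irreducible $\tau$ you invoke Lemma~\ref{lem:theta_inj} directly, for the Steinberg you invoke \cite{M} Theorem 4.12 (valid since $N_{\pi_1}=2>0=n_{\pi_1}$), and for (RU2)/(RU3) you cite the injectivity of $\theta'$ already established inside the proofs of Propositions~\ref{prop:ru2} and \ref{prop:ru3}. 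Both routes are sound; yours trades the paper's uniform appeal to Corollary~\ref{cor:1} for more case-splitting, and in exchange avoids Propositions~\ref{prop:enew} and \ref{prop:St2} altogether. One small misattribution: for ramified $\mu_1$, Theorem~\ref{thm:cond}(i) gives $N_\tau=2c(\mu_1)+c(\mu_2)$, not $N_\pi$ directly; what you need is only the inequality $N_\pi\geq N_\tau\geq 2$ (equality does hold, but by Proposition~\ref{prop:ram}, which you did not cite), and in that case you can quote \cite{M} Theorem 4.12 directly without the detour through $\theta'$-injectivity via Proposition~\ref{prop:oldform}.
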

%%%%%%%%%%%
\begin{proof}
The assertion follows from \cite{M} Theorem 4.12
if $\pi$ satisfies
$N_\pi \geq 2$ and $N_\pi > n_\pi$.
By \cite{M} Corollary 5.5,
all generic supercuspidal representations satisfy this condition.
So we may assume that $\pi$ is not supercuspidal.
As usual,
we embed $\pi$ into $\tau = \mathrm{Ind}_B^G \mu_1 \otimes \mu_2$.
If $\mu_1$ is ramified,
we have
$N_\pi \geq N_\tau = 2c(\mu_1) + c(\mu_2)\geq 2$
and $N_\pi \geq N_\tau > n_\tau = n_\pi$
by Theorem~\ref{thm:cond} (i) and Proposition~\ref{prop:cent}.
So we may suppose $\mu_1$ is unramified.

(I) Suppose that $N_\pi = 0$.
We claim that $\tau$ is irreducible.
Then $\pi = \mathrm{Ind}_B^G \mu_1 \otimes \mu_2$
is an unramified principal series representation
by Theorem~\ref{thm:cond} (i).
Hence
the assertion follows from \cite{CS} Theorem 5.4.
We shall show the claim.
We assume that $\tau$ is reducible.
If $\mu_1$ and $\mu_2$ satisfy the conditions (RU1)
or (RU2),
then 
by Propositions~\ref{prop:St} and \ref{prop:ru2},
$N_\pi$ must be positive.
Suppose that $\mu_1$ and $\mu_2$ satisfy the condition (RU3).
Then 
we have $N_\pi \geq N_\tau = c(\mu_2) > 0$
by Theorem~\ref{thm:cond} (i).
This completes the proof of the claim.

(II) Suppose that $N_\pi \geq 1$.
Then, by Lemma~\ref{lem:theta},
it is enough to show that 
$\theta'$ is injective on $V(N_\pi)$.
We regard any element in $V$ as a function in 
$\mathrm{Ind}_B^G \mu_1 \otimes \mu_2$.
By Corollary~\ref{cor:1},
any non-zero element in $V(N_\pi)$
satisfies $f(1) \neq 0$.
Since $\pi$ is the generic subrepresentation
of $\mathrm{Ind}_B^G \mu_1 \otimes \mu_2$,
Lemma~\ref{lem:ru2_2} implies 
$\mu_1|_{F^\times} \neq \omega_{E/F} |\cdot|_F^{-1}$.
Applying the argument in the proof of Lemma~\ref{lem:theta_inj},
we obtain $\theta' f(1) \neq 0$.
This implies that $\theta'$ is injective.
The proof is now complete.
\end{proof}

By Proposition~\ref{prop:oldform},
we obtain the following
%%%%%%%%%%%
\begin{cor}\label{cor:oldform}
Let $(\pi, V)$ be an irreducible generic representation of $G$.
Then, for $n \geq N_\pi$,
the set
$\{\theta^{'i} \eta^j v\ |\ i+2j+N_\pi = n\}$
forms a basis for $V(n)$.
In particular,
\[
\dim V(n) = \left\lfloor \frac{n-N_\pi}{2}\right\rfloor +1.
\]
\end{cor}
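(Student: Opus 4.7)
The plan is to derive this corollary as an immediate consequence of Proposition~\ref{prop:test} together with Proposition~\ref{prop:oldform}, which have just been established. Proposition~\ref{prop:oldform} already gives exactly the desired basis and dimension formula, but conditionally on the hypothesis that $W_v(1) \neq 0$ for every non-zero $v \in V(N_\pi)$. Thus the entire task reduces to verifying that this hypothesis is satisfied by every irreducible generic representation of $G$.

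First I would invoke Proposition~\ref{prop:test}, which establishes precisely this Whittaker non-vanishing property at the identity for newforms of any irreducible generic $(\pi,V)$. No additional case analysis is needed at this stage, since the case split (supercuspidal versus non-supercuspidal; $\mu_1$ ramified versus unramified; $N_\pi = 0$ versus $N_\pi \geq 1$) has already been absorbed into the proof of Proposition~\ref{prop:test}, which in turn relied on the explicit newforms of Section~\ref{sec:enew}, Corollary~\ref{cor:1}, and the injectivity criterion of Lemma~\ref{lem:theta}.

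Second, with the hypothesis of Proposition~\ref{prop:oldform} now verified for the given $\pi$, I would apply that proposition directly. It yields both that $\{\theta^{\prime i}\eta^j v \mid i + 2j + N_\pi = n\}$ (for any fixed non-zero $v \in V(N_\pi)$) is a basis of $V(n)$ for every $n \geq N_\pi$, and the dimension formula $\dim V(n) = \lfloor (n - N_\pi)/2 \rfloor + 1$. This is exactly the statement of the corollary, so the argument terminates.

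There is essentially no obstacle at the corollary level: all of the substantive work lies upstream, in Proposition~\ref{prop:test} and in the machinery (Lemma~\ref{lem:theta}, the explicit description of newforms, and the reducibility analysis of Section~\ref{sec:red}) that feeds into it. The only thing to be mildly careful about is the edge case $n = N_\pi$, where the indexing set reduces to $\{(i,j) = (0,0)\}$ and the basis is simply $\{v\}$; this is consistent with $\dim V(N_\pi) = 1$ as guaranteed by Theorem~\ref{thm:new}(ii).
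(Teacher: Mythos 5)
Your argument matches the paper's exactly: the corollary is stated immediately after Proposition~\ref{prop:test} with the remark that it follows from Proposition~\ref{prop:oldform}, which is precisely the two-step deduction you give. Correct and same approach.
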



\begin{thebibliography}{10}

\bibitem{Baruch}
E.~M. Baruch.
\newblock On the gamma factors attached to representations of {${\rm U}(2,1)$}
  over a {$p$}-adic field.
\newblock {\em Israel J. Math.}, 102:317--345, 1997.

\bibitem{BZ}
I.~N. Bern{\v{s}}te{\u\i}n and A.~V. Zelevinski{\u\i}.
\newblock Representations of the group {$GL(n,F),$} where {$F$} is a local
  non-{A}rchimedean field.
\newblock {\em Uspehi Mat. Nauk}, 31(3(189)):5--70, 1976.

\bibitem{Casselman}
W.~Casselman.
\newblock On some results of {A}tkin and {L}ehner.
\newblock {\em Math. Ann.}, 201:301--314, 1973.

\bibitem{CS}
W.~Casselman and J.~Shalika.
\newblock The unramified principal series of {$p$}-adic groups. {II}. {T}he
  {W}hittaker function.
\newblock {\em Compositio Math.}, 41(2):207--231, 1980.

\bibitem{GPS}
S.~Gelbart and I.~Piatetski-Shapiro.
\newblock Automorphic forms and {$L$}-functions for the unitary group.
\newblock In {\em Lie group representations, {II} ({C}ollege {P}ark, {M}d.,
  1982/1983)}, volume 1041 of {\em Lecture Notes in Math.}, pages 141--184.
  Springer, Berlin, 1984.

\bibitem{JPSS}
H.~Jacquet, I.~Piatetski-Shapiro, and J.~Shalika.
\newblock Conducteur des repr\'esentations du groupe lin\'eaire.
\newblock {\em Math. Ann.}, 256(2):199--214, 1981.

\bibitem{Keys}
D.~Keys.
\newblock Principal series representations of special unitary groups over local
  fields.
\newblock {\em Compositio Math.}, 51(1):115--130, 1984.

\bibitem{LR}
J.~Lansky and A.~Raghuram.
\newblock Conductors and newforms for {$U(1,1)$}.
\newblock {\em Proc. Indian Acad. Sci. (Math Sci.)}, 114(4):319--343, 2004.

\bibitem{LRS}
J.~Lansky and A.~Raghuram.
\newblock Conductors and newforms for {$\rm SL(2)$}.
\newblock {\em Pacific J. Math.}, 231(1):127--153, 2007.

\bibitem{M2}
M.~Miyauchi.
\newblock On epsilon factors of supercuspidal representations of unramified
  {$\mathrm{U}(2,1)$}, preprint, arxiv:1111.2212, 2011.

\bibitem{M}
M.~Miyauchi.
\newblock On local newforms for unramified {$\mathrm{U}(2,1)$}, preprint,
  ar{X}iv:1105.6004, 2011.

\bibitem{Morris-2}
L.~Morris.
\newblock Tamely ramified supercuspidal representations of classical groups.
  {I}. {F}iltrations.
\newblock {\em Ann. Sci. \'Ecole Norm. Sup. (4)}, 24:705--738, 1991.

\bibitem{Reeder}
M.~Reeder.
\newblock Old forms on {${\rm GL}_n$}.
\newblock {\em Amer. J. Math.}, 113(5):911--930, 1991.

\bibitem{RS}
B.~Roberts and R.~Schmidt.
\newblock {\em Local newforms for {GS}p(4)}, volume 1918 of {\em Lecture Notes
  in Mathematics}.
\newblock Springer, Berlin, 2007.

\end{thebibliography}
\end{document}